\documentclass[12pt]{amsart}

\usepackage{ucs}

\usepackage{amssymb}
\usepackage{amsthm}
\usepackage{amsmath}
\usepackage{latexsym}
\usepackage[cp1251]{inputenc}
\usepackage{graphicx}
\usepackage{wrapfig}
\usepackage{caption}
\usepackage{subcaption}
\usepackage{indentfirst}
\usepackage[left=2.6cm,right=2.6cm,top=2.6cm,bottom=2.6cm,bindingoffset=0cm]{geometry}
\usepackage{enumerate}
\usepackage{makecell}
\usepackage{float}
\usepackage{lipsum}
\usepackage{algorithmic}

\usepackage{fullpage}

\usepackage{amsfonts}

\usepackage[ruled]{algorithm2e}
\usepackage{mathtools}
\usepackage{dsfont}
\usepackage{rotating}

\usepackage{floatflt,epsfig}
\usepackage{lineno,hyperref}
\usepackage{longtable}

\DeclareMathOperator{\cay}{Cay}

\makeatletter 
\def\@seccntformat#1{\csname the#1\endcsname. } 
\def\@biblabel#1{#1.}

\makeatother

\title{On Neumaier Cayley graphs}

\author{Rhys J. Evans}

\address{\parbox{\linewidth}{Rhys J. Evans\\ School of Mathematics and Statistics, University of Sydney\\ Camperdown, Sydney, NSW 2006, Australia}}

\email{rhysjohn.evans@sydney.edu.au}

\author{Sergey Goryainov}

\address{\parbox{\linewidth}{Sergey Goryainov\\ School of Mathematical Sciences, Hebei International Joint Research Center for Mathematics and Interdisciplinary Science, Hebei Key Laboratory of Computational Mathematics and Applications, Hebei Workstation for Foreign Academicians, Hebei Normal University, Shijiazhuang 050024, P.R. China}}

\email{sergey.goryainov3@gmail.com}

\author{Grigory Ryabov}
\address{\parbox{\linewidth}{Grigory Ryabov\\ School of Mathematical Sciences, Hebei Key Laboratory of Computational Mathematics and Applications, Hebei Normal University, Shijiazhuang 050024, P. R. China}}
\address{\parbox{\linewidth}{Sobolev Insitute of Mathematics, Novosibirsk, Russia}}
\email{gric2ryabov@gmail.com}

\author{Da Zhao}
\address{\parbox{\linewidth}{Da Zhao\\ School of Mathematics, East China University of Science and Technology, 130 Meilong Road, Shanghai 200237, China.}}
\email{zhaoda@ecust.edu.cn}

\thanks{}

\date{}

\newtheorem{prop}{Proposition}[section]

\newtheorem{lemm}[prop]{Lemma}
\newtheorem{theo}[prop]{Theorem}
\newtheorem*{ques}{Question}
\newtheorem{corl}[prop]{Corollary}

\theoremstyle{definition}

\newtheorem*{rem}{Remark}

\def\tm#1{\item[{\rm (#1)}]}

\begin{document}
\SetKwComment{Comment}{/* }{ */}

\vspace{\baselineskip}
\vspace{\baselineskip}

\vspace{\baselineskip}

\vspace{\baselineskip}

\begin{abstract}
In the present paper, we study Neumaier Cayley graphs. First, we give a criterion for a Cayley graph to be a Neumaier graph with a spread given by the cosets of a subgroup. Further, we construct a new infinite family of Neumaier Cayley graphs of unbounded nexus. Finally, we provide an algorithm for enumerating Neumaier Cayley graphs and computational results obtained by this algorithm. 
\\
\\
\textbf{Keywords}: Neumaier graphs, Cayley graphs, difference sets.
\\
\\
\textbf{MSC}: 05B10, 05C25, 05E30. 
\end{abstract}

\maketitle

\section{Introduction}

A \emph{regular clique} in a finite regular graph is a clique such that every vertex that does not belong to the clique is adjacent to the same positive number of vertices in the clique. An \emph{edge-regular graph} is a regular graph such that every pair of adjacent vertices has a fixed number of common neighbours. A \emph{Neumaier graph} is a non-complete edge-regular graph containing a regular clique and a \emph{strictly Neumaier graph} is a non-strongly regular Neumaier graph. In \cite{N81}, Neumaier posed the problem of whether there exists a non-complete, edge-regular, non-strongly regular graph containing a regular clique.  

A $k$-regular Neumaier graph $\Gamma$ on $v$ vertices is said to have \emph{parameters} $(v,k,\lambda,m,s)$ if each edge of $\Gamma$ lies in exactly $\lambda$ triangles and $\Gamma$ has an $m$-regular clique of size $s$ (it can be shown that in this case each regular clique in $\Gamma$ is an $m$-regular clique of size $s$). The parameter $m$ is called the \emph{nexus} of $\Gamma$. Since the first construction of strictly Neumaier graphs, there has been an increased interest in the combinatorial and spectral properties of Neumaier graphs (e.g., \cite{ADDK21,ACDKZ23,GT25}).

There are many known families of strictly Neumaier graphs with 1-regular cliques (see \cite{ACDKZ23,GK18,GK19}). Each graph in these families can be seen an instance of a general construction of strictly Neumaier graphs that have a spread of 1-regular cliques \cite{EGKM21}. In contrast, there are only two known infinite families of strictly Neumaier graphs with $m$-regular cliques with $m>1$, each of which consists of a strictly Neumaier graph with a $2^i$-regular clique for every positive integer $i$. Small sporadic examples of strictly Neumaier graphs with $m$-regular cliques have also been computed for $m=1$ \cite{EGKM21,GT25} and $m\leqslant 20$ \cite{GT25}. Therefore, the following question is still unanswered.

\begin{ques} 
For which positive integers $m$ does there exist a strictly Neumaier graph with an $m$-regular clique?
\end{ques}

In this paper, we are interested in Neumaier Cayley graphs and, in particular, in the above question for them. The first result of the paper is an easy criterion for a Cayley graph to be a Neumaier graph with a spread given by the cosets of a subgroup. In the following $H^\#$ denotes the set of nonidentity elements of the group $H$.

\begin{theo}\label{main1}
Let $G$ be a group, $H<G$, and $S\subseteq G$ such that $e\notin S$, $S=S^{-1}$, and $H^\#\subseteq S$. Then $\Gamma=\cay(G,S)$ is a Neumaier graph with a regular clique~$H$ and parameters~$(v,k,\lambda,m,s)$, where $v=|G|$, $k=|S|$, and $s=|H|$, if and only if the following conditions hold:
\begin{itemize}
\tm{1} $|T\cap Hg|=m$ for every $g\in G\setminus H$,

\tm{2} $|Th\cap T|=\lambda-s+2$ for every $h\in H^\#$, 

\tm{3} $|Tg\cap T|=\lambda-2m+2$ for every $g\in T$,
\end{itemize}
where $T=S\setminus H^\#$. 
\end{theo}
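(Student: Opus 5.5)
The plan is to translate every graph-theoretic condition into a counting statement about translates of $S$, and then split $S$ as the disjoint union $H^\#\sqcup T$. I use the convention that $x\sim y$ in $\Gamma=\cay(G,S)$ iff $yx^{-1}\in S$, so the neighbourhood of a vertex $g$ is $Sg$. Right multiplications are automorphisms, so $\Gamma$ is vertex-transitive. First I record two preliminary facts: $T=T^{-1}$, since both $S$ and $H^\#$ are inverse-closed, and $T\cap H=\varnothing$. Because $H^\#\subseteq S$, the subgroup $H$ is a clique of size $s$, and each right coset $Hg$ is its image under an automorphism. Hence the right cosets of $H$ form a spread of cliques, all of which are regular as soon as $H$ is.

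\emph{Regularity of the clique.} Take a vertex $g\notin H$. Its neighbours in $H$ form the set $Sg\cap H$, and $|Sg\cap H|=|S\cap Hg^{-1}|$. Since $Hg^{-1}$ is a coset different from $H$, it misses $H^\#$, so this number equals $|T\cap Hg^{-1}|$. As $g$ ranges over $G\setminus H$, so does $g^{-1}$. Therefore ``$H$ is $m$-regular'' is exactly condition (1). Positivity of $m$ forces $G\neq H$, which gives non-completeness. One should also check that $G\neq H$ conversely forces $m>0$ in the equivalence, or else simply note that $m\ge 1$ is part of the parameter set.

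\emph{Edge-regularity.} By vertex-transitivity, it suffices to count the common neighbours of $e$ and $g$ for $g\in S$. This number is $|S\cap Sg|$. I split into two cases, writing $Sg=H^\#g\sqcup Tg$ and intersecting each piece with $H^\#$ and with $T$.
\begin{itemize}
\item[(a)] $g=h\in H^\#$. Here $H^\#h=H\setminus\{h\}$ meets $H^\#$ in $s-2$ elements and meets $T$ in nothing. Also $Th\subseteq G\setminus H$ misses $H^\#$. So $|S\cap Sh|=s-2+|T\cap Th|$, and the condition $|S\cap Sh|=\lambda$ is exactly (2).
\item[(b)] $g=t\in T$. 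Here $H^\#t=Ht\setminus\{t\}$ misses $H^\#$, and it meets $T$ in $|T\cap Ht|-1$ elements, because $t\in T$. Next, $Tt\cap H^\#$ consists of the elements of $Tt\cap H$ other than $e$, and $e\in Tt$ because $t^{-1}\in T$. Moreover $|Tt\cap H|=|T\cap Ht^{-1}|$.
\end{itemize}
Using (1), case (b) gives $|S\cap St|=(m-1)+(m-1)+|T\cap Tt|$. So, given (1), edge-regularity on edges of type (b) is exactly (3).

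Both directions now follow from these computations. The only delicate point is the order of logic in case (b), because that count uses condition (1). In the forward direction, (1) comes from regularity of $H$. In the reverse direction, (1) is assumed first, and then (2) and (3) give $|S\cap Sg|=\lambda$ for all $g\in S$. I expect the main obstacle to be this bookkeeping in case (b): the correction terms $-1$ come from $t\in Ht$ and from $e\in Tt$, and it is easy to double count or miss them.
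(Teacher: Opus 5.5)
Your argument is the paper's argument. You split $S=H^\#\sqcup T$, get $m$-regularity of $H$ from $|Sg\cap H|=|T\cap Hg^{-1}|$, and count $|S\cap Sg|$ separately for $g\in H^\#$ and $g\in T$, using (1) to evaluate the two correction terms of size $m-1$ in the second case. Your counts are correct. In case (a) you rightly get $|H^\#h\cap H^\#|=s-2$, where the paper's proof loosely writes this set as $H^\#$.

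The one wrong step is the sentence ``Positivity of $m$ forces $G\neq H$, which gives non-completeness.'' The condition $G\neq H$ does not give non-completeness. Take $S=G^\#$, so $T=G\setminus H$. Then (1), (2), (3) all hold with $m=s$ and $\lambda=v-2$: indeed $|Th\cap T|=v-s$ and $|Tt\cap T|=|G\setminus(H\cup Ht)|=v-2s$. Yet $\Gamma=K_v$ is not a Neumaier graph.

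The correct observation is that, given (1), $\Gamma$ is non-complete if and only if $m<s$.
\begin{itemize}
\item[(i)] If $m<s$, then for $g\notin H$ some element of $Hg$ lies outside $T$, hence outside $S$, so it is not adjacent to $e$.
\item[(ii)] If $m=s$, then $G\setminus H\subseteq T$, so $S=G^\#$.
\end{itemize}
So the ``if'' direction needs $1\le m<s$ as part of the hypothesis. This is implicit in calling $(v,k,\lambda,m,s)$ the parameters of a Neumaier graph, and the paper's proof assumes it silently. Replace your sentence with this remark and the proof is complete.
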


It should be noted that the smallest strictly Neumaier graph with parameters $(16,9,4,2,4)$ is isomorphic to a Cayley graph over each of the groups $C_2\times C_8$, $C_2\times D_8$, $D_{16}$, where $C_n$ and $D_n$ are cyclic and dihedral groups of order~$n$, respectively. Several constructions of Neumaier Cayley graphs can be found in~\cite{ACDKZ23,GK18,GT25}. Among them, one can find infinite families of graphs of nexus~1 and several graphs of nexus greater than 1. However, it was not previously known whether there exists an infinite family of strictly Neumaier Cayley graphs of nexus greater than~1. The second result of the paper states the existence of such a family.

\begin{theo}\label{main2}
For every even positive integer $n$, there exists a strictly Neumaier Cayley graph with parameters
$$(2^{2n+2},(2^{n+1}-1)(2^n+1),2(2^n-1)(2^{n-1}+1),2^n,2^{n+1})$$
over any abelian group of order $2^{2n+2}$ having an elementary abelian subgroup of index~$2$.
\end{theo}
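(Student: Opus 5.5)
The plan is to apply Theorem~\ref{main1}, so the whole task is to choose $H$ and $T=S\setminus H^\#$ and then check conditions (1)--(3). First I record what the parameters force. Put $s=2^{n+1}$, $m=2^n$, $\lambda=2(2^n-1)(2^{n-1}+1)=2^{2n}+2^n-2$. Then
$$\lambda-s+2=\lambda-2m+2=2^{2n}-2^n,$$
so conditions (2) and (3) merge into one requirement: $|Tg\cap T|=2^{2n}-2^n$ for every $g\in H^\#\cup T$. Also $|T|=k-s+1=2^{2n+1}-2^n=m(2^{n+1}-1)$. This is consistent with (1), because $G/H$ has $2^{n+1}-1$ nontrivial cosets and $T$ must meet each of them in exactly half of it.

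\emph{Construction.} Take $H$ to be an elementary abelian subgroup of order $2^{n+1}$ inside the elementary abelian subgroup of index $2$. Choose coset representatives $x_c$ for the cosets $c\in (G/H)^\#$. Fix a bijection $c\mapsto\varphi_c$ from $(G/H)^\#$ onto the nonzero linear functionals $H\to\mathbb F_2$; this is possible since both sets have size $2^{n+1}-1$. Then set
$$T=\bigcup_{c\in (G/H)^\#} x_c\,D_c,\qquad D_c=\{h\in H:\varphi_c(h)=\epsilon_c\},$$
where each $D_c$ is an affine hyperplane of $H$ and $\epsilon_c\in\{0,1\}$ is still to be chosen. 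Condition (1) is then immediate.

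\emph{Verification of the merged condition.} Take first $g=h_0\in H^\#$. The coset $c$ contributes $|D_c\cap D_ch_0|$, which is $2^n$ if $h_0\in\ker\varphi_c$ and $0$ otherwise. Since $c\mapsto\varphi_c$ is a bijection onto the nonzero functionals, $h_0$ lies in exactly $2^n-1$ of their kernels. This gives $2^n(2^n-1)$, as required.

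Now take $g=x_{c_0}h_0\notin H$. The cosets $H$ and $c_0$ contribute nothing, because $T\cap H=\emptyset$. Each of the remaining $2^{n+1}-2$ cosets $c$ contributes the intersection of a translate of $D_c$ with a translate of $D_{cc_0}$. These are affine hyperplanes with distinct directions $\varphi_c\neq\varphi_{cc_0}$, so each intersection has size $2^{n-1}$. The total is $(2^{n+1}-2)2^{n-1}=2^{2n}-2^n$, again as required.

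\emph{Symmetry of $S$.} We need $T=T^{-1}$. For $G=C_2^{2n+2}$ this is automatic. For $G=C_4\times C_2^{2n}$ the cosets $c$ with $x_c$ of order $4$ must satisfy $D_{c}=x_c^{-2}D_c$, which constrains the choice of $\epsilon_c$, $x_c$ and the bijection $c\mapsto\varphi_c$. I would handle this by choosing $H\ni x_c^2$ so that $x_c^2\in\ker\varphi_c$ for those $c$. Then by Theorem~\ref{main1}, $\cay(G,S)$ is a Neumaier graph with the stated parameters.

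\emph{Main obstacle: strictness.} The computation above shows that $|T\cap Tg|$ is constant on all of $G\setminus H$, so it alone does not rule out strong regularity. I therefore have to compute the number $\mu(g)$ of common neighbours of $e$ and a non-neighbour $g=x_{c_0}h$ with $h\notin D_{c_0}$. This number is
$$\mu(g)=(2^{2n}-2^n)+|H^\#\cap Tg|+|T\cap H^\#g|,$$
and the last two terms depend on the signs $\epsilon_c$ and on how the bijection interacts with the group structure of $G/H$. I would try to choose the bijection $c\mapsto\varphi_c$ non-additively, for instance so that it is governed by a quadratic or bent-type function on $G/H$. Such a choice makes these terms take two different values, and I expect this is where the hypothesis that $n$ is even is needed: it gives the required parity or quadratic-form count that forces two distinct values of $\mu$. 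Failing a direct count, I would show non-strong-regularity via the character sums $\chi(S)$. The Fourier transforms of the hyperplanes $D_c$ are supported on $\{0,\varphi_c\}$, which makes $\chi(S)$ explicit, and it would then suffice to exhibit more than three distinct eigenvalues.
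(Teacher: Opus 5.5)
Your argument has a genuine gap, and it cannot be closed within your construction, because every graph your recipe produces is strongly regular. Your verification proves more than conditions (2) and (3). In the case $g=x_{c_0}h_0\notin H$ you never used $g\in T$, so you have shown $|Tg\cap T|=2^{2n}-2^n$ for \emph{every} $g\neq e$. In other words, $T$ is a Hadamard difference set of McFarland type.

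Once that holds, the two remaining terms in your formula for $\mu(g)$ are forced. A non-neighbour $g$ of $e$ lies outside $H$. The set $Tg\cap H=(T\cap Hg^{-1})g$ has exactly $m=2^n$ elements by (1), and it does not contain $e$ because $g^{-1}\notin T=T^{-1}$. Also $|T\cap H^\# g|=|T\cap Hg|=2^n$, since $g\notin T$. Hence
$\mu(g)=2^{2n}-2^n+2^{n+1}=\lambda+2$ for every non-neighbour $g$. This holds independently of the $\epsilon_c$, the representatives $x_c$ and the bijection $c\mapsto\varphi_c$.

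So the graph is strongly regular and has exactly three eigenvalues, and the character-sum fallback cannot work either. More generally, under (1) and $T=T^{-1}$ the same count gives $\mu(g)=|Tg\cap T|+2m$ on non-neighbours, while (3) gives $\lambda-2m+2$ on $T$. Strictness therefore forces $T$ \emph{not} to be a difference set. It is also telling that your argument never uses that $n$ is even.

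Separately, $T=T^{-1}$ is impossible in your framework when $G\cong C_4\times C_2^{2n}$. Let $z$ be the unique nontrivial square in $G$. Since $H$ is elementary abelian, the $2^n$ cosets of $H$ outside the elementary abelian subgroup of index $2$ consist of elements $x$ of order $4$, with $x^{-1}=xz$.
\begin{itemize}
\item If $z\in H$, each such coset satisfies $c=c^{-1}$, and $x_cD_c=x_c^{-1}D_c$ forces $\varphi_c(z)=0$. But only $2^n-1$ nonzero functionals vanish at $z$, so pigeonhole gives a contradiction.
\item If $z\notin H$, then $c^{-1}=cz\neq c$. The condition $T\cap c^{-1}=x_c^{-1}D_c$ makes $D_{c^{-1}}$ a translate of $D_c$, so $\varphi_{c^{-1}}=\varphi_c$, contradicting injectivity.
\end{itemize}

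The paper uses a different connection set, which is not a difference set. It takes $G=A\times(H_0\times C)$ with $A\geq A_0\cong C_2^n$ of index $2$, so $A$ may be $C_4\times C_2^{n-1}$. It then sets $S=(A\setminus A_0)T\cup S_0C\cup\{c\}$. Here $\cay(A_0\times H_0,S_0)$ is the strongly regular partial-spread graph. The set $T$ is an RSRDS in $H_0\times C$ relative to $C$ with parameters $(2^n,2,2^n,2^{n-1})$, essentially a bent function on $C_2^n$, and this is exactly where $n$ even is needed. Edge-regularity reduces to $v_0=4(k_0-\lambda_0-1)$. Strictness follows because the non-neighbours in $A_0^\#$ and in $A_0^\#c$ occur in $\underline{S}^2$ with the different coefficients $v_0+2\mu_0$ and $2\mu_0$.
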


As we are able to check, the family of the graphs from Theorem~\ref{main2} is the first known infinite family of strictly Neumaier Cayley graphs of nexus greater than~$1$. The construction of these graphs is based on a usage of the theory of relative difference sets. We also describe a family of Neumaier Cayley graphs of nexus~1 arising from relative difference sets (see Section~\label{s:relative1}).

Finally, the paper contains several computational results. Namely, we provide an algorithm for enumerating Neumaier Cayley graphs with a spread given by the cosets of a subgroup and collect the graphs obtained by a computer calculation using this algorithm.

This paper is organised as follows. In Section~\ref{s:proof1}, we prove Theorem \ref{main1}. A necessary background of relative difference sets is given in Section~\ref{s:rds}. In Section \ref{s:relative}, we provide constructions of Neumaier Cayley graphs based on a usage of relative difference sets. In particular, we prove Theorem~\ref{main2} in this section. In Section \ref{s:enum}, we describe an algorithm for enumerating Neumaier Cayley graphs with a spread given by the cosets of a subgroup and present some computational results. Namely, we list all Neumaier Cayley graphs with a spread given by the cosets of a subgroup and nexus greater than~1 on at most 64 vertices.

\section{Proof of Theorem \ref{main1}}\label{s:proof1}

For completeness, we give a proof of Theorem \ref{main1}. We also give a corollary describing the parameters of a Neumaier graph coming from Theorem \ref{main1} in terms of the corresponding group and connection set.

\begin{proof}[Proof of Theorem~\ref{main1}]
The theorem follows from the fact that $H$ is a clique and each vertex $g\in G$ has neighbourhood $Sg=H^\#g\cup Tg$ in $\Gamma$. 

   For $g\in G\setminus H$ and $h\in H$, $g^{-1}$ and $h$ are adjacent in $\Gamma$ if and only if $hg\in T$. Therefore $g^{-1}$ has $|T\cap Hg|$ neighbours in $H$. It follows that (1) holds if and only if $H$ is $m$-regular. 

    Now assume (1) holds and let $g\in S$. Then $g$ is adjacent to $e$ in $\Gamma$, and shares the neighbours $Sg\cap S$ with $e$. By (1), the shared neighbours are exactly $(Tg\cap T)\cup H^\#$ if $g\in H$, and $(Tg\cap T)\cup (Tg\cap H^\#)\cup (T\cap H^\#g)$ if $g\notin H$. Furthermore, the latter two sets of the second union of sets have size $m-1$ each, and so (2) and (3) hold if and only if $\Gamma$ is edge-regular with parameters $(v,k,\lambda)$ (where we also use the fact that $\Gamma$ is vertex-transitive).
\end{proof}

\begin{corl}
With the notation of Theorem~\ref{main1}, the parameters $v$, $k$, and $\lambda$ can be expressed as follows:
\begin{align*}
    v&=ns\\
    k&=s-1+(n-1)m\\
    \lambda&=s-2+\frac{(n-1)m(m-1)}{(s-1)}
\end{align*}
where $n=|G:H|$.   
\end{corl}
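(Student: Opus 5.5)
Under the hypotheses of Theorem~\ref{main1}, the plan is to derive each of the three formulas by elementary counting, using only conditions (1)--(3).

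First, $v=|G|=|G:H|\,|H|=ns$ by Lagrange's theorem.

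For $k$, we write $S$ as the disjoint union $S=H^\#\cup T$. By construction $T\cap H=\emptyset$, since $e\notin S$ and all other elements of $H$ lie in $H^\#$. So $T$ is contained in the union of the $n-1$ cosets $Hg$ with $g\notin H$. Condition (1) says $T$ meets each of these cosets in exactly $m$ elements. Hence $|T|=(n-1)m$ and $k=|H^\#|+|T|=s-1+(n-1)m$.

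For $\lambda$, the key step is to double count the set
\[
P=\{(h,t,t')\,:\, h\in H^\#,\ t,t'\in T,\ t'=th\}.
\]
The number of triples with a fixed $h$ is $|Th\cap T|$, which equals $\lambda-s+2$ by condition (2). Summing over $h$ gives $|P|=(s-1)(\lambda-s+2)$.

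On the other hand, a pair $t\ne t'$ in $T$ contributes a triple exactly when $t^{-1}t'\in H^\#$. Choosing the convention $t'=th$, this means $t$ and $t'$ lie in the same left coset $tH$, and then $h$ is determined by the pair. So we need $T$ to meet each left coset $gH$ with $g\notin H$ in exactly $m$ elements. This follows from (1) together with $T=T^{-1}$. Indeed, $T^{-1}=T$ because $S=S^{-1}$ and $(H^\#)^{-1}=H^\#$, and inversion maps $T\cap Hg$ bijectively onto $T\cap g^{-1}H$.

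Hence $|P|=(n-1)m(m-1)$: each of the $n-1$ nontrivial left cosets contributes its $m(m-1)$ ordered pairs of distinct elements. Equating the two counts and solving for $\lambda$ gives
\[
\lambda=s-2+\frac{(n-1)m(m-1)}{s-1}.
\]

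The main (mild) obstacle is this left/right coset bookkeeping. Condition (2) is phrased with right multiplication, $Th$, while (1) is phrased with right cosets $Hg$. So the transfer from right to left cosets via $T=T^{-1}$ is the step to state carefully.

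Condition (3) is not needed for these formulas. The computation also requires $s\geq 2$, so that $s-1$ is nonzero; this holds because $H$ is a nontrivial clique.
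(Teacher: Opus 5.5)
Your proof is correct: $k$ comes from (1), and $\lambda$ from (1), (2) and $T=T^{-1}$. The paper states this corollary without any proof, so your argument supplies details the paper leaves implicit.

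The natural alternative is the standard double counting available in any Neumaier graph, which ignores the group structure:
\begin{itemize}
\item Counting edges between the clique $H$ and its complement gives $s(k-s+1)=(v-s)m$.
\item Counting triples $(x,y,z)$ with $y\neq z$ in $H$ and $x\notin H$ adjacent to both gives $s(s-1)(\lambda-s+2)=(v-s)m(m-1)$.
\end{itemize}
Substituting $v=ns$ yields both formulas. That version needs only that $H$ is an $m$-regular clique in an edge-regular graph, and it never distinguishes left from right cosets.

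Your version is the same count with $y$ fixed to be $e$, expressed through conditions (1) and (2). Your first count of $P$ is $\sum_{h\in H^\#}|Th\cap T|$. Your second count amounts to each $t\in T$ having exactly $m-1$ neighbours in $H^\#$: indeed $th\in T$ if and only if $t$ is adjacent to $h^{-1}$. The advantage of your route is that it works entirely with the data of Theorem~\ref{main1}. It shows explicitly that condition (3) is not used. It also isolates $T=T^{-1}$ as exactly what converts the right-coset condition (1) into the left-coset count your set $P$ requires.

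One point to tighten is your reason for $s\geq 2$. If $s=1$, condition (1) with $m\geq 1$ forces $T=G^\#$. Then $\Gamma$ would be complete, contradicting the definition of a Neumaier graph.
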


\section{Relative difference sets}\label{s:rds}

In this section, we provide a necessary background of group rings and relative difference sets. We follow~\cite{MR2024} in general. 

Let $G$ be a finite group and $\mathbb{Z}G$ the integer group ring. The identity element of $G$ and the set of all nonidentity elements of $X\subseteq G$ are denoted by $e$ and $X^\#$, respectively. Let $\xi=\sum_{g\in G} a_g g,\eta=\sum_{g\in G} b_g g\in\mathbb{Z}G$. The product of $\xi$ and $\eta$ will be written as $\xi\cdot \eta$. Given $X\subseteq G$, we set $\underline{X}=\sum \limits_{x\in X} {x}\in\mathbb{Z}G$ and $X^{(-1)}=\{x^{-1}:~x\in X\}$. It is easy to check that if $X\subseteq H\leq G$, then 
\begin{equation}\label{trivial}
\underline{H}\cdot \underline{X}=\underline{X}\cdot \underline{H}=|X|\underline{H}.
\end{equation}

Let $N$ be a normal subgroup of $G$. A subset $T$ of $G$ is called a \emph{relative to~$N$ difference set} (\emph{RDS} for short) in $G$ if 
\begin{equation}\label{defrds}
\underline{T}\cdot\underline{T}^{(-1)}=ke+\lambda(\underline{G}-\underline{N}),
\end{equation}
for $k=|T|$ and some positive integer $\lambda$ or, equivalently, if every element in $G\setminus N$ has exactly $\lambda$ representations $t_1t_2^{-1}$  with $t_1,t_2\in T$ and no non-identity element in~$N$ has such a representation. The numbers $(m,n,k,\lambda)$, where $m=|G:N|$ and $n=|N|$, are called the \emph{parameters} of $T$ and the subgroup $N$ is called a \emph{forbidden subgroup}. An RDS $T$ is called \emph{semiregular} (\emph{reversible}, resp.) if $m=k$ ($T=T^{(-1)}$, resp.). If $T$ is semiregular, then $k=m=\lambda n$ and $T$ is a transversal for $N$ in $G$. If $T$ is semiregular and reversible, then $T\cap N=\{e\}$. Reversible semiregular relative difference sets will be called RSRDSs for short further. For a background of RDSs, we refer the reader to the survey~\cite{Pott1996}. 

Given a positive integer $r$, a prime~$p$, and a prime power~$q$, an extraspecial $p$-group of order $p^{2r+1}$ and exponent~$p^2$ and a Heisenberg group of dimension~$(2r+1)$ over a finite field $\mathbb{F}_q$ of order~$q$ are denoted by $E^2_{2r+1}(p)$ and $H_{2r+1}(q)$, respectively.

Let $\mathcal{E}$ be the class consisting of all nontrivial groups of the form 
$$G~\text{and}~G\times C_2^2 \times C_3^{2a} \times C_{p_1}^4 \times \cdots \times C_{p_i}^4,$$
where $G$ is an arbitrary group of the form
$$G=C_4^b \times C_{2^{c_1}}^2 \times \cdots C_{2^{c_j}}^2,$$
$p_1,\ldots,p_i$ are (not necessarily distinct) odd primes, and $a$, $b$, $c_1,\ldots,c_j$ are nonnegative integers. One can check that $\mathcal{E}$ is closed under taking direct products and each group from $\mathcal{E}$ has order of the form $4u^2$ for some positive integer~$u$. By the definition, $\mathcal{E}$ contains a group of order $4u^2$ if and only if $u$ is of the form
$$u=2^r3^sv^2,$$
where $r,s\in\{0,1\}$ and $v$ is an arbitrary nonzero integer.

The lemma below collects families of RSRDSs with $n$ dividing $\lambda$.

\begin{lemm}\label{hadamar}
A group $G$ from the first column of Table~1 has an RSRDS with parameters from the second column of this table. 
\end{lemm}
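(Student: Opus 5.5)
The lemma is a compilation result: each row of Table~1 records a known family of semiregular relative difference sets, and I would go through the rows one at a time. For each row I would (a) quote or recall an explicit construction of an RDS $T$ in the listed group with the listed forbidden subgroup $N$, (b) check that it is semiregular, i.e. $|T|=|G:N|$, (c) check reversibility $T=T^{(-1)}$, which is the only property not usually stated in the sources, and (d) check that $n=|N|$ divides $\lambda$. By the remarks after \eqref{defrds}, semiregularity gives $k=m=\lambda n$, so (d) amounts to checking $n^2\mid m$. For the standard parameter shapes $(q^{2r},q,q^{2r},q^{2r-1})$ and $(4u^2,2,4u^2,2u^2)$ this holds as soon as $r\geq 1$.

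\emph{Rows coming from the class $\mathcal{E}$.} Here I would start from a reversible Hadamard (Menon) difference set $D$ in a group $K\in\mathcal{E}$ of order $4u^2$. Reversible Menon difference sets are known to exist exactly in the groups of $\mathcal{E}$ by the classical product constructions: Turyn, and the $C_3^{2a}$ and $C_{p}^4$ building blocks. In $G=K\times C_2=K\times\langle z\rangle$ I would take
$$T=D\cup (K\setminus D)z .$$
Writing $\underline{T}=\underline{D}+(\underline{K}-\underline{D})z$ and using \eqref{trivial} together with $\underline{D}\cdot\underline{D}^{(-1)}=u^2e+(u^2-u)\underline{K}$, a short group-ring computation should give
$$\underline{T}\cdot\underline{T}^{(-1)}=4u^2e+2u^2(\underline{G}-\underline{N}),\qquad N=\langle z\rangle .$$
Reversibility of $T$ follows from $D=D^{(-1)}$ and $z=z^{-1}$. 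If $e\in D$, I would replace $D$ by its complement first, so that $T\cap N=\{e\}$. Finally $n=2$ divides $\lambda=2u^2$.

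\emph{Rows for $E^2_{2r+1}(p)$ and $H_{2r+1}(q)$.} Here the forbidden subgroup is the centre $Z$, of order $p$ or $q$. For the Heisenberg group, realised as triples $(x,y,c)$ with $x,y\in\mathbb{F}_q^r$, I would take
$$T=\{(x,y,Q(x,y))\},$$
where $Q$ is a suitable quadratic form chosen so that $(x,y,c)^{-1}=(-x,-y,-c+x\cdot y)$ maps $T$ to itself. Taking $Q(x,y)=\tfrac12\,x\cdot y$ should work for odd $q$. The RDS property then reduces to nondegeneracy of the symplectic form appearing in the commutator, which makes the map $(x,y)\mapsto Q(x+a,y+b)-Q(x,y)-(\text{cocycle})$ a nonconstant affine function whenever $(a,b)\neq0$. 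For $E^2_{2r+1}(p)$ I would copy the analogous construction from the literature on semiregular RDSs in extraspecial groups.

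The main obstacle I expect is verifying reversibility in the nonabelian rows, since inversion introduces the cocycle term and the quadratic form must be matched to it. In characteristic $2$ or for the exponent-$p^2$ groups the naive choice may fail, and I would first try adjusting $Q$ by a linear term. If none of this works directly, I would fall back on citing the precise reference listed in the table for each row.
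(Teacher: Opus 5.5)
\paragraph{Gap: rows 3 and 4 are missing.} Your construction $T=D\cup(K\setminus D)z$ only yields RSRDSs whose forbidden subgroup has order $2$. So it covers row~3 only when $j=1$, where $G_0\times G_0\in\mathcal{E}$. For $G_0\times G_0\times C_2^j$ with $j\geq 2$ (forbidden subgroup $C_2^j$) and for $C_4^l$ (forbidden subgroup $C_4$) you give nothing.

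These rows need their own constructions. Reversibility, which you yourself flag as the property the sources rarely state, comes for free only in elementary abelian $2$-groups. For $G_0\cong C_2^i$ it is easy: the set $\{(x,y,\pi(xy)):x,y\in\mathbb{F}_{2^i}\}$ works, where $\pi\colon\mathbb{F}_{2^i}\to\mathbb{F}_2^j$ is a surjective $\mathbb{F}_2$-linear map. For general abelian $G_0$ and for $C_4^l$ it is real work.

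Row~2 is likewise only deferred to unnamed literature. Your fallback of citing ``the reference listed in the table'' does not work, because Table~1 lists no references. The paper's proof is entirely bibliographic:
\begin{itemize}
\item rows 1 and 2 are \cite[Theorems~1.1 and~1.2]{MR2024};
\item row 3 is \cite[Corollary~4.2]{LM1990};
\item row 4 is \cite[Theorem~1.4]{CHL2005};
\item row 5 combines \cite[Theorem~4.2]{JS1997} with \cite[Corollary~2.11]{AJP1990}.
\end{itemize}
Row~4 as printed also cannot be taken literally. We have $|C_4^l|=2^{2l}$, but an RDS with $m=2^{2l}$ and $n=4$ lives in a group of order $mn=2^{2l+2}$. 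For that row the intended statement has to be read off from the source.

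\paragraph{What you did carry out.} This part is correct and more explicit than the paper.
\begin{itemize}
\item Row 5: the group-ring computation for $T=D\cup(K\setminus D)z$ does give $4u^2e+2u^2(\underline{G}-\underline{N})$; the paper takes this step from \cite[Corollary~2.11]{AJP1990}. However, the existence of a reversible Menon difference set in every group of $\mathcal{E}$ is a theorem you must cite, and ``exactly'' claims more than is needed.
\item Your complement remark is backwards. If $e\in D$, then already $T\cap N=\{e\}$. It is when $e\notin D$ that $T\cap N=\{z\}$, and complementing $D$ (which replaces $T$ by $Tz$) fixes this.
\item Row 1: for $H_{2r+1}(q)$ the choice $Q(x,y)=\tfrac12 x\cdot y$ works, and $T$ is inverse-closed. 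For $g=(a,b,d)$ and $t=(x,y,\tfrac12 x\cdot y)$, one gets $gt\in T$ if and only if $\tfrac12(a\cdot y-b\cdot x)=\tfrac12 a\cdot b-d$. This has $q^{2r-1}$ solutions when $(a,b)\neq 0$ and none when $(a,b)=0\neq d$.
\item You are right to use $\lambda=q^{2r-1}$, which is forced by $m=\lambda n$. The value $\lambda=q$ printed in Table~1 is correct only for $r=1$.
\end{itemize}
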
 

\begin{proof}
Families of RSRDSs from Lines~$1$, $2$, $3$ and~$4$ of Table~$1$ can be found in~\cite[Theorem~1.1]{MR2024},~\cite[Theorem~1.2]{MR2024}, \cite[Corollary~4.2]{LM1990}, and~\cite[Theorem~1.4]{CHL2005}, respectively. From~\cite[Theorem~4.2]{JS1997} it follows that each group from $\mathcal{E}$ has a reversible Hadamard difference set. So family of RSRDS from Line~$5$ of Table~$1$ can be constructed by~\cite[Corollary~2.11]{AJP1990}.
\end{proof}

\begin{table}
\centering
{\small
\begin{tabular}{|l|l|l|}
  \hline
  group $G$ & parameters $(m,n,k,\lambda)$ & comment   \\
  \hline
  $H_{2r+1}(q)$ & $(q^{2r},q,q^{2r},q)$ &  $q$ is an odd prime power, $r\geq 1$\\ \hline
  $E^2_{2r+1}(p)$ & $(p^{2r},p,p^{2r},p)$  & $p$ is an odd prime, $r\geq 1$ \\  \hline
  $G_0\times G_0\times C_2^j$ & $(2^{2i},2^j,2^{2i},2^{2i-j})$  &  $G_0$ is abelian, $|G_0|=2^i$, $i\geq j\geq 1$\\  \hline
  
  $C_4^l$ & $(2^{2l},4,2^{2l},2^{2l-2})$  &  $l\geq 3$\\  \hline
  $G_0\times C_2$ & $(4u^2,2,4u^2,2u^2)$  & $G_0\in \mathcal{E}$, $|G_0|=4u^2$ \\  \hline
\end{tabular}
}
\caption{Families of RSRDSs}
\end{table}

We finish this section with the lemma which provides a description of a Cayley graph whose connection set is an RSRDS. It is taken from~\cite[Theorem~1.2(2),~Lemma~3.1]{CL2005}.

\begin{lemm}\label{delta}
Let $G$ be a finite group, $N$ a normal subgroup of $G$, and $T$ an RSRDS in $G$ with forbidden subgroup $N$ and parameters $(m,n,k,\lambda)$. Then the graph $\Delta=\cay(G,T^\#)$ is a distance-regular antipodal graph of diameter~$3$ with intersection array
$$\{m-1,(n-1)m/n,1;1,m/n,m-1\}$$
in which distance between $g_1,g_2\in G$ is equal to~$3$ if and only if $g_2\in Ng_1$. 
\end{lemm}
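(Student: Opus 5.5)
We verify directly that $\Delta=\cay(G,T^\#)$ is distance-regular with the stated intersection array and that the distance-$3$ relation is given by the cosets of $N$. The argument uses only the group-ring identity \eqref{defrds} together with semiregularity ($k=m=\lambda n$) and reversibility ($T=T^{(-1)}$, so $T\cap N=\{e\}$).

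\emph{Step 1: basic structure.} Since $T=T^{(-1)}$ and $e\in T$, the set $T^\#$ is inverse-closed and avoids $e$, so $\Delta$ is an undirected $(m-1)$-regular Cayley graph. Because $\Delta$ is vertex-transitive, it suffices to compute distances and intersection numbers from the vertex $e$. The neighbourhood of $e$ is $\Gamma_1(e)=T^\#$. Since $T$ is a transversal of $N$ and $T\cap N=\{e\}$, the set $T^\#$ consists of exactly one element from each nontrivial coset of $N$.

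\emph{Step 2: counting walks of length $2$.} Using $T=T^{(-1)}$, identity \eqref{defrds} becomes
\[
\underline{T}^2=me+\lambda(\underline{G}-\underline{N}).
\]
Expanding $\underline{T}=e+\underline{T^\#}$ gives
\[
\underline{T^\#}^2=(m-1)e+\lambda(\underline{G}-\underline{N})-2\underline{T^\#}.
\]
Read coefficientwise, this says the following about the number of common neighbours of $e$ and $g$:
\begin{itemize}
\item if $g\in T^\#$, there are $\lambda-2$ common neighbours, so $a_1=\lambda-2$;
\item if $g\in G\setminus(N\cup T)$, there are $\lambda$ common neighbours;
\item if $g\in N^\#$, there are $0$ common neighbours.
\end{itemize}

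\emph{Step 3: distances.} Every element of $G\setminus(N\cup T)$ has $\lambda\geq1$ common neighbours with $e$, so these elements lie at distance exactly $2$. Hence $\Gamma_2(e)=G\setminus(N\cup T)$ and $c_2=\lambda=m/n$. An element $x\in N^\#$ is neither adjacent to $e$ nor at distance $2$ from it. Each neighbour $t\in T^\#$ of $x$ satisfies $t\notin N$, so $t\in\Gamma_2(e)$ (using $T\cap N=\{e\}$ and $t\neq e$), and $x$ has $m-1$ neighbours. Therefore $\Gamma_3(e)=N^\#$, the diameter is $3$, and $c_3=m-1$. By translation, $d(g_1,g_2)=3$ if and only if $g_2g_1^{-1}\in N^\#$, i.e.\ $g_2\in Ng_1\setminus\{g_1\}$. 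This is the stated antipodal description.

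\emph{Step 4: remaining intersection numbers.} The number $b_1$ counts neighbours of $t\in T^\#$ at distance $2$ from $e$:
\[
b_1=m-1-1-a_1=m-\lambda=(n-1)m/n.
\]
Two elements of $N^\#$ are never adjacent, since $T\cap N=\{e\}$; hence $a_3=0$, which matches $c_3=m-1$. For $b_2$: a vertex $g\in\Gamma_2(e)$ has neighbours only in $\Gamma_1\cup\Gamma_2\cup\Gamma_3$. Its neighbours in $N^\#$ are the elements $tg\in N$ with $t\in T^\#$. Since $T$ is a transversal, there is exactly one $t\in T$ with $tg\in N$, and here $t\neq e$ because $g\notin N$. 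This gives $b_2=1$, and the array is $\{m-1,(n-1)m/n,1;1,m/n,m-1\}$.

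One subtlety needs checking: the count of common neighbours of $e$ and $g$ in Step 2 correctly equals the coefficient of $g$ in $\underline{T^\#}^2$, namely the number of pairs $(t_1,t_2)$ with $t_1t_2=g$. This holds because a common neighbour $t_2^{-1}$ corresponds to $t_1=gt_2^{-1}\in T^\#$; here one must be careful about left versus right multiplication conventions in $\cay$, and reversibility handles this. Beyond that check, the main step is the computation of $b_2$ via the transversal property, and I expect it to be straightforward.
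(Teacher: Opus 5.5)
Your verification is correct once $e\in T$ (see the caveat below), and it takes a genuinely different route from the paper. The paper gives no argument of its own; it imports the lemma from \cite[Theorem~1.2(2), Lemma~3.1]{CL2005}. You instead work entirely in $\mathbb{Z}G$:
\begin{itemize}
\item from \eqref{defrds} you get $\underline{T^\#}^2=(m-1)e+\lambda(\underline{G}-\underline{N})-2\underline{T^\#}$;
\item you read off that the distance partition from $e$ is $\{e\}$, $T^\#$, $G\setminus(N\cup T)$, $N^\#$;
\item you obtain $b_2=1$, $c_3=m-1$ and $a_3=0$ from the transversal property.
\end{itemize}
This is short and self-contained, and it shows exactly which hypotheses are used. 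The citation only places the lemma in the general theory of antipodal distance-regular covers of complete graphs. Two small repairs are needed. In Step~3, the neighbours of $x\in N^\#$ are the elements $tx$ with $t\in T^\#$, not $t$ itself. They avoid $T^\#$ because $tx\in Nt$ and $T\cap Nt=\{t\}$, or simply because $x$ is not within distance $2$ of $e$. In Step~4, you should note that $tg\neq e$, which holds since $g\notin T=T^{(-1)}$.

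The one real caveat is inherited from the paper's remark in Section~\ref{s:rds}: $T\cap N=\{e\}$ does \emph{not} follow from semiregularity and reversibility. These only give $|T\cap N|=1$ and force its element $x$ to satisfy $x=x^{-1}$. Here is a counterexample:
\begin{itemize}
\item take $G=\langle a\rangle\times\langle b\rangle\times\langle z\rangle\cong C_2^3$ with $N=\langle z\rangle$, and $T=\{a,b,ab,z\}$;
\item then $\underline{T}^2=4e+2(\underline{G}-\underline{N})$, so $T$ is an RSRDS with parameters $(4,2,4,2)$, but $e\notin T$;
\item consequently $\cay(G,T^\#)=\cay(G,T)$ is $4$-regular and $z$ is adjacent to $e$, so the statement itself fails for this $T$.
\end{itemize}
Hence $e\in T$ must be an explicit hypothesis. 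It holds automatically when $|N|$ is odd, as in the Heisenberg and extraspecial examples. For abelian $G$ it can always be arranged by replacing $T$ with $Tx$, which is again a reversible semiregular RDS and contains $e$. With that hypothesis added, your proof is complete.
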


\section{Infinitely many examples from relative difference sets}\label{s:relative}

In this section, we present two infinite families of Neumaier Cayley graphs constructed from RDSs. The graphs from the first family can also be obtained by application of the construction from~\cite{GK19} to Cayley graphs whose connections sets are RSRDS. The graphs from the second family are exactly the graphs mentioned in Theorem~\ref{main2}. The main tool for the proofs of the results in this section is computations in the group ring. The similar technique was used for studying Deza Cayley graphs in~\cite{BPR2021}.  

\subsection{Construction~$1$}\label{s:relative1}

\begin{lemm}\label{neumaier}
Let $G$ be a finite group, $N$ a normal subgroup of $G$, and $T$ an RSRDS in $G$ with forbidden subgroup $N$ and parameters $(n\lambda,n,n\lambda,\lambda)$. Suppose that $n$ divides $\lambda$. Then for an arbitrary group $U$ of order~$\frac{\lambda}{n}$, the graph 
$$\Gamma=\cay(G\times U,S),~\text{where}~S=T^\#\cup H^\#~\text{and}~H=N\times U,$$ 
is a Neumaier graph with parameters $(n\lambda^2,(n+1)\lambda-2,\lambda-2,1,\lambda)$. Moreover, $\Gamma$ is strongly regular if and only if $n=\lambda$; in this case $\Gamma$ has parameters $(n^3,n^2+n-2,n-2,n+2)$.
\end{lemm}

\begin{proof}
 Clearly, $|G\times U|=|G||U|=m\lambda$, $|H|=|N||U|=\lambda$, and $|S|=|T^\#|+|H^\#|=k+\lambda-2$. Observe that $H$ is a clique in $\Gamma$ because $H^\#\subseteq S$. Let $g\in (G\times U)\setminus H$. The number of the neighbours of $g$ in $H$ is equal to $|Sg\cap H|=|(T^\#\cup H^\#)g\cap H|=|(T^\#g)\cap H|$. Since $T$ is semiregular, $T$ is a transversal for $N$. So $|(T^\#g)\cap H|=1$ for every $g\in (G\times U)\setminus H$. Therefore $H$ is $1$-regular clique. 

It remains to verify that $\Gamma$ is edge-regular with parameter~$\lambda-2$ or, equivalently, every element of $S$ enters the element $\underline{S}^2$ with coefficient~$\lambda-2$. A straightforward computation in the group ring of $G$ implies that

\begin{equation}\label{ssquare}
\begin{split}
\underline{S}^2&=(\underline{T}^\#+\underline{H}^\#)^2=\\
            &=(\underline{T}+\underline{H}-2e)^2=\\
            &=\underline{T}^2+\underline{H}^2+4e+\underline{T}\cdot\underline{H}+\underline{H}\cdot\underline{T}-4\underline{T}-4\underline{H}=\\
            &=ke+\lambda(\underline{G}-\underline{N})+\lambda\underline{H}+4e+2 \underline{(G\times U)}-4\underline{T}-4\underline{H}=\\
            &=(k+\lambda-2)e+(\lambda-2)\underline{S}+(\lambda+2)(\underline{G^\#\setminus (N^\# \cup T^\#)})+2(\underline{(G\setminus N)U^\#})
\end{split}
\end{equation}
as required. It should be mentioned that in the above computation, we use Eq.~\eqref{trivial} and the equalities $\underline{T}\cdot\underline{H}=\underline{H}\cdot\underline{T}=\underline{(G\times U)}$ which hold because $T$ is a transversal for $N$ in $G$ and hence $\underline{T}\cdot\underline{N}=\underline{N}\cdot\underline{T}=\underline{G}$. 

Eq.~\eqref{ssquare} yields that $\Gamma$ is strongly regular if and only if $\lambda+2=2$ or one of the sets $G^\#\setminus (N^\#\cup T^\#)$, $U^\#(G\setminus N)$ is empty. Obviously, $\lambda>0$ and $G^\#\setminus (N^\#\cup T^\#)\neq \varnothing$. Therefore $\Gamma$ is strongly regular if and only if $U^\#(G\setminus N)=\varnothing$. The latter condition is equivalent to $|U|=\frac{\lambda}{n}=1$ and we are done. 
\end{proof}

RSRDSs from Lemma~\ref{hadamar} satisfy the condition of Lemma~\ref{neumaier} and hence they can be used for constructing Neumaier Cayley graphs. Table~$2$ from Corollary~\ref{neunew} collects families of strictly Neumaier Cayley graphs obtained in such way.

\begin{corl}\label{neunew}
For each group $G$ from the first column of Table~$2$, there is a strictly Neumaier Cayley graph with parameters $(v,k,\lambda,m,s)$ from the second column of Table~$2$ over $G\times U$, where $U$ is an arbitrary group of order~$\frac{s(k-\lambda)}{|G|}$. 
\end{corl}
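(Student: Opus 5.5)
The plan is to combine Lemma~\ref{hadamar} with Lemma~\ref{neumaier}, row by row of Table~1, producing Table~2. For each group $G$ in the first column of Table~1 with an RSRDS $T$ of parameters $(m,n,k,\lambda)$, we first check the hypotheses of Lemma~\ref{neumaier}. Semiregularity gives $m=k=n\lambda$. It remains to verify $n\mid\lambda$ in each line:
\begin{itemize}
\item for $H_{2r+1}(q)$ and $E^2_{2r+1}(p)$ we have $\lambda=n$;
\item for $G_0\times G_0\times C_2^j$ we have $\lambda=2^{2i-j}$ and $n=2^j$, with $i\ge j$;
\item for $C_4^l$ we have $\lambda=2^{2l-2}$, $n=4$ and $l\ge3$;
\item for $G_0\times C_2$ we have $\lambda=2u^2$ and $n=2$.
\end{itemize}

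Lemma~\ref{neumaier} then yields a Neumaier Cayley graph over $G\times U$ for any $U$ of order $\lambda/n$. Its parameters are $(v,k',\lambda',1,s)=(n\lambda^2,(n+1)\lambda-2,\lambda-2,1,\lambda)$. We next rewrite the order of $U$ in terms of these graph parameters to match the statement: $s(k'-\lambda')/|G| = \lambda\cdot n\lambda/(n\lambda\cdot n)=\lambda/n$. This is the same $U$ as in Lemma~\ref{neumaier}. Substituting the parameters of each line into $(n\lambda^2,(n+1)\lambda-2,\lambda-2,1,\lambda)$ gives the entries of the second column of Table~2.

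The main point is the word \emph{strictly}. By Lemma~\ref{neumaier}, $\Gamma$ is strongly regular exactly when $n=\lambda$. So we must only keep the rows, or parameter ranges, where $n\neq\lambda$:
\begin{itemize}
\item Lines 1 and 2 have $n=\lambda$, so they give strongly regular graphs and must be excluded from Table~2. Alternatively they appear only with a nontrivial $U$, which is impossible since $|U|=1$.
\item Line 3 requires $2i-j\neq j$, i.e.\ $i>j$.
\item Line 4 gives $2^{2l-2}\neq4$, which holds automatically for $l\ge3$.
\item Line 5 requires $2u^2\neq2$, i.e.\ $u>1$.
\end{itemize}
I expect this bookkeeping, including matching Table~2's exact rows and conditions, to be the only delicate step. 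Everything else is direct substitution.
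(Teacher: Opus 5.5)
Your route, which feeds each RSRDS from Lemma~\ref{hadamar} into Lemma~\ref{neumaier}, is the paper's own. However, your handling of Lines~1 and~2 is wrong, so two of the five rows of Table~2 are left unproved. In fact you conclude they must be deleted, which contradicts the statement. You took $\lambda=q$ (resp.\ $p$) from Table~1 at face value. Yet you yourself note that semiregularity forces $m=k=n\lambda$, and with $m=q^{2r}$, $n=q$ this gives $\lambda=q^{2r-1}$ (resp.\ $p^{2r-1}$). So that Table~1 entry is a misprint whenever $r\geq 2$. Your claim that substituting into $(n\lambda^2,(n+1)\lambda-2,\lambda-2,1,\lambda)$ reproduces Table~2 should have exposed this: with $\lambda=n=q$ you get $(q^3,q^2+q-2,q-2,1,q)$, not the listed $(q^{4r-1},\dots,q^{2r-1})$. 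With the correct value $\lambda=q^{2r-1}$ everything goes through:
\begin{itemize}
\item $n\mid\lambda$ and $|U|=\lambda/n=q^{2r-2}$;
\item substitution gives Lines~1 and~2 of Table~2 exactly;
\item by Lemma~\ref{neumaier} the graph is strictly Neumaier precisely when $q^{2r-1}\neq q$, i.e.\ $r\geq 2$.
\end{itemize}

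Your restrictions $i>j$ in Line~3 and $u>1$ in Line~5 are correct and go beyond the paper. The paper's justification is only the sentence that the RSRDSs of Lemma~\ref{hadamar} satisfy the hypotheses of Lemma~\ref{neumaier}, and it never checks strictness. For $i=j$, $u=1$ or $r=1$ one has $n=\lambda$, so the construction yields a strongly regular graph. For $i=j=1$ or $u=1$ the parameters are $(8,4,0,1,2)$, which by Mantel's theorem force $K_{4,4}$. So the correct conclusion is that Table~2 holds for $r\geq 2$, $i>j$, $u>1$, not that Lines~1 and~2 disappear. One further check: your computation $|U|=s(k-\lambda)/|G|=\lambda/n$ relies on $|G|=mn$. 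Applied to Line~4, this shows the group there must have order $2^{2l+2}$, not $|C_4^l|=2^{2l}$.
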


\begin{table}[h]
\centering
{\small
\begin{tabular}{|l|l|l|}
  \hline
  group $G$ & parameters $(v,k,\lambda,m,s)$ & comment   \\
  \hline
  $H_{2r+1}(q)$ & $(q^{4r-1},q^{2r}+q^{2r-1}-2,q^{2r-1}-2,1,q^{2r-1})$ &  $q$ is an odd prime power, $r\geq 1$\\ \hline
  $E^2_{2r+1}(p)$ & $(p^{4r-1},p^{2r}+p^{2r-1}-2,p^{2r-1}-2,1,p^{2r-1})$  & $p$ is an odd prime, $r\geq 1$ \\  \hline
  $G_0\times G_0\times C_2^j$ & $(2^{4i-j},2^{2i}+2^{2i-j}-2,2^{2i-j}-2,1,2^{2i-j})$  &  $G_0$ is abelian, $|G_0|=2^i$, $i\geq j\geq 1$\\  \hline
	$C_4^l$ & $(2^{4l-2},2^{2l}+2^{2l-2}-2,2^{2l-2}-2,1,2^{2l-2})$  &  $l\geq 3$\\  \hline
  $G_0\times C_2$ & $(8u^4,6u^2-2,2u^2-2,1,2u^2)$  & $G_0\in \mathcal{E}$, $|G_0|=4u^2$ \\  \hline
\end{tabular}
}
\caption{Strictly Neumaier Cayley graphs}
\end{table}

\begin{rem}
It should be noted that the construction given in the above proposition is a special case of the constructions given in~\cite[Section~3]{EGKM21} and~\cite{GK19}. Indeed, each of the graphs $\Gamma_u$, $u\in U$, with vertex set $Gu$ and edge set $\{(gu,tgu):~g\in G,~t\in T^\#\}$ admits a spread of $1$-regular cocliques corresponding to the $N$-cosets and two vertices of $\Gamma$ are adjacent if and only if they belong to the same $\Gamma_u$ and adjacent in $\Gamma_u$ or belong to the corresponding cocliques of $\Gamma_{u_1}$ and $\Gamma_{u_2}$, $u_1,u_2\in U$ (possibly, $u_1=u_2$). 
\end{rem}

The corollary below immediately follows the definition of the class of groups $\mathcal{E}$ and Corollary~\ref{neunew}.

\begin{corl}\label{divisor}
For every prime $p$, there exists a strictly Neumaier Cayley graph whose number of vertices is divisible by~$p$. 
\end{corl}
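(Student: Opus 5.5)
We read Corollary~\ref{divisor} off Corollary~\ref{neunew}, using the last line of Table~2. That line gives, for every $G_0\in\mathcal{E}$ with $|G_0|=4u^2$, a strictly Neumaier Cayley graph over $(G_0\times C_2)\times U$. The number of vertices is $v=8u^4$. So it suffices to find, for each prime $p$, a group $G_0\in\mathcal{E}$ whose order $4u^2$ makes $8u^4$ divisible by $p$. Equivalently, we need $p\mid u$, or $p=2$.

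For $p=2$ nothing needs to be chosen, since $8u^4$ is always even. We can take $G_0=C_4$, which is in $\mathcal{E}$ with $b=1$ and has $u=1$. This gives a graph on $8$ vertices.

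For $p=3$, take $G_0=C_2^2\times C_3^2$, i.e.\ $G=\{e\}$ trivial and $a=1$; this group lies in $\mathcal{E}$ because the class allows $G\times C_2^2\times C_3^{2a}\times\cdots$ with $b=j=0$. Its order is $36=4\cdot 3^2$, so $u=3$ and $v=8\cdot 81$ is divisible by $3$.

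For an odd prime $p\ge 5$, take $G_0=C_2^2\times C_p^4$, i.e.\ $a=0$ and $p_1=p$. Its order is $4p^4=4(p^2)^2$, so $u=p^2$. This matches the characterisation $u=2^r3^sv^2$ with $r=s=0$ and $v=p$. Then $v=8p^8$ is divisible by $p$.

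In each case we apply Lemma~\ref{hadamar} (Line~5) and Lemma~\ref{neumaier} with $n=2$ and $\lambda=2u^2$; here $n\mid\lambda$ and $|U|=u^2$. The graph is strictly Neumaier because $n=2\ne\lambda=2u^2$ exactly when $u>1$.

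The only delicate point is the case $p=2$ with $u=1$. There $\lambda=n$, and Lemma~\ref{neumaier} then says the graph is strongly regular. So for $p=2$ we should instead use a group with $u>1$, for instance $G_0=C_4^2$ (so $u=2$) or $G_0=C_2^2\times C_3^2$ (so $u=3$, giving $v=8\cdot 81$, still even). Then $\lambda=2u^2>2=n$, the graph is strictly Neumaier, and $v=8u^4$ is even.

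The choices for odd $p$ already have $u>1$, so they need no adjustment. Apart from this check, the argument is a direct reading of Table~2.
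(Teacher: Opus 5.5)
Your proof is correct and is the paper's argument made explicit: the corollary is read off Line~5 of Table~2 by choosing $G_0\in\mathcal{E}$ of order $4u^2$ with $p\mid u$, so that $p$ divides $v=8u^4$. Your check via Lemma~\ref{neumaier} that strictness requires $n\neq\lambda$, i.e.\ $u>1$, is a worthwhile point that the paper's Table~2 leaves implicit. This check is also what correctly rules out your first choice $G_0=C_4$ for $p=2$, which gives $K_{4,4}$.
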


\subsection{Construction~$2$}\label{s:relative2}

Let $A_0$ and $H_0$ be nontrivial abelian groups, $G_0=A_0\times H_0$, $S_0$ an identity-free inverse-closed subset of $G_0$ such that $S_0\cap A_0=\varnothing$, and $\Gamma_0=\cay(G_0,S_0)$. Clearly, $\Gamma_0$ is $k_0$-regular graph with $v_0$ vertices, where $k_0=|S_0|$ and $v_0=|G_0|$.

Let $A$ be an abelian group such that $A\geq A_0$ and $|A:A_0|=2$, $C$ a cyclic group of order~$2$, $c$ a nontrivial element of $C$, $H=H_0\times C$, and $G=A\times H$. Suppose that $m_0=|H_0|$ is even and $H$ has an RSRDS $T$ with forbidden subgroup $C$ and parameters~$(m_0,2,m_0,m_0/2)$. Observe that $T$ is a transversal for $C$ in $H$. Put
$$S=(A\setminus A_0)T\cup S_0C\cup\{c\}.$$
Since $S_0$ is inverse-closed and identity-free, $S$ so is and hence 
$$\Gamma=\cay(G,S)$$
is a Cayley graph. The graph $\Gamma$ is $k$-regular graph with $v$ vertices, where $k=|S|$ and $v=|G|$. It easy to compute that 
$$v=|G|=|A||H|=4|A_0||H_0|=4|G_0|=4v_0$$ 
and 
$$k=|S|=|A\setminus A_0||T|+|S_0||C|+1=|A_0||H_0|+|S_0||C|+1=v_0+2k_0+1,$$
where the third equality holds because $|A:A_0|=2$ and $T$ is a transversal for $C\cong C_2$ in $H=H_0\times C$.

The graph $\Gamma$ is a union of the graphs $\Gamma_1=\cay(G,S_1)$ and $\Gamma_2=\cay(G,S_2)$, where $S_1=(A\setminus A_0)T$ and $S_2=S_0C\cup\{c\}$. It is easy to see that the graph $\Gamma_1$ is the tensor product of the graphs $\Gamma_{11}=\cay(A,A\setminus A_0)$ and $\Gamma_{12}=\cay(H,T)$. The first of them is a complete bipartite graph with two parts of size $|A_0|$, whereas the second one is a distance-regular antipodal graph of diameter~$3$ (see Lemma~\ref{delta}). The graph $\Gamma_2$ is a disjoint union of two $2$-clique extensions of $\Gamma_0$.

\begin{lemm}\label{c2l1}
If $H_0$ is an $(m_0/2)$-regular clique of size~$m_0$ in $\Gamma_0$, then $H$ is $(m/2)$-regular clique of size~$m$ in $\Gamma$, where $m=2m_0$.
\end{lemm}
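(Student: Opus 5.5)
The plan is to check the two defining properties directly: first that $H$ is a clique, then that every vertex outside $H$ has exactly $m_0=m/2$ neighbours in $H$. Note that $|H|=|H_0||C|=2m_0=m$.

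For the regularity count I will use the first paragraph of the proof of Theorem~\ref{main1}. That argument uses neither edge-regularity nor conditions (2) and (3). It shows that if $H^\#\subseteq S$, then $H$ is an $m'$-regular clique of $\cay(G,S)$ if and only if $|T'\cap Hg|=m'$ for every $g\in G\setminus H$, where $T'=S\setminus H^\#$.

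\textbf{Step 1: $H^\#\subseteq S$.} Since $H_0$ is a clique of $\Gamma_0$ containing $e$, we have $H_0^\#\subseteq S_0$. Now $H^\#=H_0^\#C\cup\{c\}$, because the nonidentity elements of $H_0\times C$ are those of $H_0^\#\times C$ together with $c$. Hence $H^\#\subseteq S_0C\cup\{c\}\subseteq S$, so $H$ is a clique of size $m$ in $\Gamma$.

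Next I identify $T'$. We have $(A\setminus A_0)T\cap H=\varnothing$, and $S_0\cap A_0=\varnothing$ gives $S_0\cap H_0^\#=S_0\cap H_0$. Therefore
$$T'=(A\setminus A_0)T\,\cup\,(S_0\setminus H_0)C.$$

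\textbf{Step 2: the count $|T'\cap Hg|$.} Take $g\in G\setminus H$ and write $g=ah$ with $a\in A$ and $h\in H$. Then $a\neq e$ and $Hg=aH$, because $G=A\times H$ is abelian. There are two cases.

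If $a\in A\setminus A_0$, then $aH$ is disjoint from $A_0H\supseteq (S_0\setminus H_0)C$. Hence $T'\cap aH=aT$, which has size $|T|=m_0$, since $T$ is a transversal for $C$ in $H$.

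If $a\in A_0^\#$, then $aH\cap(A\setminus A_0)H=\varnothing$. Hence $T'\cap aH=(S_0\cap aH_0)C$, which has size $2|S_0\cap aH_0|$. Apply the same criterion, i.e.\ the first paragraph of the proof of Theorem~\ref{main1}, to $\Gamma_0$ with its $(m_0/2)$-regular clique $H_0$ and $g=a\in G_0\setminus H_0$. It gives $|(S_0\setminus H_0^\#)\cap H_0a|=m_0/2$. Since $a\notin H_0$, the coset $H_0a$ misses $H_0^\#$, so this says $|S_0\cap aH_0|=m_0/2$. Thus the count in this case is also $m_0$.

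In both cases the count is $m_0=m/2$, so $H$ is an $(m/2)$-regular clique of size $m$ in $\Gamma$.

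The only step needing care is the second case: one must translate the regularity hypothesis on $H_0$ into the count $|S_0\cap aH_0|=m_0/2$, and the hypothesis $S_0\cap A_0=\varnothing$ is used there and in identifying $T'$. Everything else is bookkeeping of cosets in the direct product $A\times H$.
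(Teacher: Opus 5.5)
Your proof is correct and is essentially the paper's argument: the same case split on whether the $A$-component $a$ of $g$ lies in $A_0^\#$ or in $A\setminus A_0$, giving the count $|T|=m_0$ in the second case and the $(m_0/2)$-regularity of $H_0$ doubled by $C$ in the first. The only difference is cosmetic: you count $|T'\cap Hg|$ through the criterion in the proof of Theorem~\ref{main1}, where the paper counts $|Sg\cap H|$ directly. (Minor point on your closing remark: the hypothesis $S_0\cap A_0=\varnothing$ is not actually used in this lemma; identifying $T'$ only needs $e\notin S_0$.)
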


\begin{proof}
Let $H_0$ be a clique of $\Gamma_0$. Then $H_0^\#\subseteq S_0$. By the definition of $S$, we have $H^\#\subseteq S$ and hence $H$ is a clique of $\Gamma$. Suppose that $H_0$ is an $(m_0/2)$-regular clique in $\Gamma_0$ and then let us show that $H$ is an $m/2=m_0$-regular clique in $\Gamma$. Each element $g\in G\setminus H$ can be presented in the form $g=ah$, where $a\in A^\#$ and $h\in H$. If $a\in A_0^\#$, then the number $|Sg\cap H|$ of neighbours of $g$ in $H$ is equal to
$$|Sg\cap H|=|S_0Ca\cap H|=|(S_0a\cap H_0)C|=m_0.$$
If $a\in A\setminus A_0$, then 
$$|Sg\cap H|=|(A\setminus A_0)Ta\cap H|=|T|=m_0.$$
Thus, $H$ is an $(m/2)$-regular clique.
\end{proof}

\begin{lemm}\label{c2l2}
Suppose that $\Gamma_0$ is $(v_0,k_0,\lambda_0)$-edge-regular. Then $\Gamma$ is edge-regular if and only if 
\begin{equation}\label{criterion}
v_0=4(k_0-\lambda_0-1).
\end{equation} 
If the latter is the case, then the parameters of $\Gamma$ are $(v,k,\lambda)$, where $\lambda=2k_0$. 
\end{lemm}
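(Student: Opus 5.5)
Since $G$ is abelian, $\Gamma$ is edge-regular with parameter $\lambda$ exactly when every element of $S$ has coefficient $\lambda$ in $\underline{S}^2$. So I will expand $\underline{S}^2$ in $\mathbb{Z}G$ and read off the coefficients on the three kinds of elements of $S$: elements of $S_1=(A\setminus A_0)T$, elements $sc^i$ with $s\in S_0$, and $c$ itself. Write $\underline{B}$ for $\underline{A\setminus A_0}$. Then
$$\underline{S}=\underline{B}\cdot\underline{T}+\underline{S_0}\cdot\underline{C}+c,\qquad \underline{S}^2=\underline{S_1}^2+2\,\underline{S_1}\cdot\underline{S_2}+\underline{S_2}^2 .$$

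The ingredients are as follows. Since $|A:A_0|=2$, we have $\underline{B}^2=|A_0|\underline{A_0}$. By reversibility and the RDS equation \eqref{defrds}, $\underline{T}^2=\underline{T}\cdot\underline{T}^{(-1)}=m_0e+\frac{m_0}{2}(\underline{H}-\underline{C})$. Since $T$ is a transversal for $C$ in $H$, $\underline{T}\cdot\underline{C}=\underline{H}$. Finally $\underline{C}^2=2\underline{C}$ and $c\underline{C}=\underline{C}$. With these I will compute:
\begin{itemize}
\tm{a} $\underline{S_1}^2=|A_0|\underline{A_0}\bigl(m_0e+\frac{m_0}{2}(\underline{H}-\underline{C})\bigr)$, which is supported on $A_0H$.
\tm{b} $\underline{S_1}\cdot\underline{S_2}=\underline{B}\cdot\underline{S_0}\cdot\underline{H}+\underline{B}\cdot\underline{T}c=k_0\underline{B}\cdot\underline{H}+\underline{B}\cdot\underline{T}c$. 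This uses that every element of $S_0$ is $a h_0$ with $a\in A_0$, $h_0\in H_0$, and $\underline{B}a=\underline{B}$, $h_0\underline{H}=\underline{H}$.
\tm{c} $\underline{S_2}^2=2\,\underline{S_0}^2\cdot\underline{C}+2\,\underline{S_0}\cdot\underline{C}+e$.
\end{itemize}

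Next I read off the coefficients.
\begin{itemize}
\tm{i} On $bt\in S_1$: terms (a) and (c) are supported on $A_0H$, so they contribute nothing. The term $2k_0\underline{B}\cdot\underline{H}$ contributes $2k_0$. The term $2\underline{B}\cdot\underline{T}c$ contributes $0$, because $tc\notin T$ ($T$ is a transversal for $C$).
\tm{ii} On $sc^i$ with $s=ah_0\in S_0$: here $h_0\ne e$ because $S_0\cap A_0=\varnothing$, so $h_0c^i\notin C$. Term (a) then gives $|A_0|m_0/2=v_0/2$. Term (b) gives $0$. Term (c) gives $2\lambda_0+2$, using $\underline{S_0}^2\subseteq\mathbb{Z}G_0$ and $G_0\cap G_0c=\varnothing$. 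The total is $v_0/2+2\lambda_0+2$.
\tm{iii} On $c$: term (a) gives $0$, since its $C$-coefficient vanishes. Term (c) gives $2k_0$ from $2\underline{S_0}^2\underline{C}$, namely the coefficient of $e$ in $\underline{S_0}^2$ doubled.
\end{itemize}
Therefore $\Gamma$ is edge-regular if and only if $v_0/2+2\lambda_0+2=2k_0$, which is \eqref{criterion}, and in that case $\lambda=2k_0$.

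The main obstacle is the bookkeeping in (ii): the vanishing of $\underline{S_1}^2$ on $C$ and the hypothesis $S_0\cap A_0=\varnothing$ have to be used at exactly the right points. Everything else is mechanical use of \eqref{trivial} and the transversal property of $T$.
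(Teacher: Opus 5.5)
Your proposal is correct and follows essentially the same route as the paper. Both split $\underline{S}^2$ into $\underline{S_1}^2+2\underline{S_1}\cdot\underline{S_2}+\underline{S_2}^2$ and evaluate the pieces using the RDS equation, $(\underline{A}-\underline{A_0})^2=|A_0|\underline{A_0}$, and the transversal property $\underline{T}\cdot\underline{C}=\underline{H}$; both then find that $S_1$ and $c$ receive coefficient $2k_0$ while $S_0C$ receives $v_0/2+2\lambda_0+2$. The only difference is that you read off coefficients element by element, whereas the paper rewrites the sums (for example, $\underline{B}\cdot\underline{T}c=\underline{B}\cdot(\underline{H}-\underline{T})$), which is cosmetic.
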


\begin{proof}
Let us compute $\underline{S}^2$. Recall that $S=S_1\cup S_2$, where $S_1=(A\setminus A_0)T$ and $S_2=S_0C\cup\{c\}$. Clearly,
\begin{equation}\label{eq0}
\underline{S}^2=\underline{S_1}^2+\underline{S_2}^2+2\underline{S_1}\cdot\underline{S_2}.
\end{equation}
Each summand in the right-hand side of the above equality will be computed separately. In the computations further, we freely use Eq.~\eqref{trivial}.
Firstly,
\begin{equation}\label{eq1}
\begin{split}
\underline{S_1}^2&=(\underline{A}-\underline{A_0})^2\cdot\underline{T}^2=\\
                 &=\underline{A_0}^2\cdot(m_0e+(m_0/2)(\underline{H}-\underline{C}))=\\
                 &=|A_0|\underline{A_0}\cdot(m_0e+(m_0/2)\underline{H_0^\#C})=\\
                 &=m_0|A_0|\underline{A_0}+|A_0|(m_0/2)\underline{A_0H_0^\#C}=\\
                 &=v_0\underline{A_0}+(v_0/2)(\underline{G_0}-\underline{A_0})\cdot \underline{C},
\end{split}
\end{equation}
where the second equality holds because $|A:A_0|=2$ and $T$ is an RSRDS with forbidden subgroup $C$ and parameters~$(m_0,2,m_0,m_0/2)$. 
Secondly,
\begin{equation}\label{eq2}
\begin{split}
\underline{S_2}^2&=(\underline{S_0C}+c)^2=\\
                 &=\underline{S_0}^2\cdot \underline{C}^2+2c\underline{S_0C}+e=\\
                 &=2(k_0e+\lambda_0\underline{S_0}+\xi)\cdot \underline{C}+2\underline{S_0C}+e=\\
                 &=(2k_0+1)e+2k_0c+2(\lambda_0+1)\underline{S_0C}+2\xi\underline{C},
\end{split}
\end{equation}
where $\xi\in \mathbb{Z}G$ is such that the support of $\xi$ lies in $G_0^\#\setminus S_0$ and the third equality holds because $\Gamma_0$ is edge-regular and hence $|S_0g_0\cap S_0|=\lambda_0$ for every $g_0\in S_0$. Finally,
\begin{equation}\label{eq3}
\begin{split}
2\underline{S_1}\cdot\underline{S_2}&=2(\underline{A}-\underline{A_0})\cdot\underline{T}\cdot(\underline{S_0C}+c)=\\
                                     &=2(\underline{A}-\underline{A_0})\cdot\underline{T}\cdot\underline{C}\cdot\underline{S_0}+2(\underline{A}-\underline{A_0})\cdot\underline{T}c=\\
                                     &=2(\underline{A}-\underline{A_0})\cdot\underline{H}\cdot \underline{S_0}+2(\underline{A}-\underline{A_0})\cdot(\underline{H}-\underline{T})=\\
                                     &=2(\underline{G}-\underline{G_0\times C})\cdot \underline{S_0}+2(\underline{A}-\underline{A_0})\cdot(\underline{H}-\underline{T})=\\
                                     &=2k_0(\underline{G}-\underline{G_0\times C})+2(\underline{A}-\underline{A_0})\cdot(\underline{H}-\underline{T})=\\
                                     &=2k_0\underline{S_1}+2(k_0+1)(\underline{A}-\underline{A_0})\cdot(\underline{H}-\underline{T}),
\end{split}
\end{equation}
where the third equality holds because $T$ is a transversal for $C$ in $H$. From Eqs.~\eqref{eq0}-\eqref{eq3} and $S_0\cap A_0=\varnothing$ it follows that $\underline{S_1}$ and $\underline{c}$ enter $\underline{S}^2$ with coefficient $2k_0$, whereas $\underline{S_0C}=\underline{S}-\underline{S_1}-\underline{c}$ enters $\underline{S}^2$ with coefficient $v_0/2+2(\lambda_0+1)$. Therefore $\Gamma$ is edge-regular if and only if Eq.~\eqref{criterion} holds and if the latter is the case, then the parameter $\lambda$ is equal to $2k_0$. 
\end{proof}

Lemmas~\ref{c2l1} and~\ref{c2l2} imply the following statement.

\begin{corl}\label{c2cor1}
If $\Gamma_0$ is a Neumaier graph with parameters~$(v_0,k_0,\lambda_0,m_0/2,m_0)$ and $v_0=4(k_0-\lambda_0-1)$, then $\Gamma$ is a Neumaier graph with parameters~$(v,k,\lambda,m/2,m)$, where
$$v=4v_0,~k=v_0+2k_0+1,~\lambda=2k_0,~m=2m_0.$$
\end{corl}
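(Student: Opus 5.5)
The corollary follows by combining Lemma~\ref{c2l1} and Lemma~\ref{c2l2} with the size computations made just before Lemma~\ref{c2l1}. I will check the three ingredients of a Neumaier graph with the stated parameters in turn: the vertex count and valency, edge-regularity, and the regular clique, together with non-completeness.

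First, the computation preceding Lemma~\ref{c2l1} already gives $v=|G|=4v_0$ and $k=|S|=v_0+2k_0+1$. These only use $|A:A_0|=2$ and the fact that $T$ is a transversal for $C$ in $H$, so no Neumaier hypothesis on $\Gamma_0$ is needed here.

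Second, a Neumaier graph $\Gamma_0$ with parameters $(v_0,k_0,\lambda_0,m_0/2,m_0)$ is in particular $(v_0,k_0,\lambda_0)$-edge-regular. Since $v_0=4(k_0-\lambda_0-1)$ is assumed, Lemma~\ref{c2l2} applies and shows that $\Gamma$ is edge-regular with $\lambda=2k_0$.

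Third, the $(m_0/2)$-regular clique needed in Lemma~\ref{c2l1} must be $H_0$ itself. The standing setup of Construction~2 implicitly assumes $H_0^\#\subseteq S_0$ and that $H_0$ is the regular clique of $\Gamma_0$. This is the one point to state explicitly: the hypothesis is read as ``$H_0$ is an $(m_0/2)$-regular clique of size $m_0$ in $\Gamma_0$.'' Indeed, Lemma~\ref{c2l1} is phrased for $H_0$, not for an arbitrary regular clique. With this reading, Lemma~\ref{c2l1} gives that $H$ is an $(m/2)$-regular clique of size $m=2m_0$ in $\Gamma$, and the nexus $m/2=m_0$ is positive, as the definition of a regular clique requires.

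Finally, $\Gamma$ is not complete. We have $k=v_0+2k_0+1$, while a complete graph would need $k=4v_0-1$. Equality would force $2k_0=3v_0-2$, which is impossible since $k_0<v_0$. Hence $\Gamma$ is a Neumaier graph with parameters $(4v_0,\,v_0+2k_0+1,\,2k_0,\,m/2,\,m)$. The only potential obstacle is the interpretation of the clique hypothesis discussed in the third step; everything else is direct substitution.
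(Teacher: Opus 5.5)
Your proposal is correct and follows the paper, whose proof is just the remark that Lemmas~\ref{c2l1} and~\ref{c2l2} imply the corollary. Your explicit checks of $v$, $k$ and non-completeness, and your reading of the clique hypothesis as being about $H_0$, are exactly the details the paper leaves implicit. That reading is in fact not needed: the computation of Lemma~\ref{c2l1}, split according to whether $g\in A_0H$ or $g\in(A\setminus A_0)H$ and using that $T$ is a transversal for $C$ in $H$, shows that $KC$ is an $m_0$-regular clique of size $2m_0$ in $\Gamma$ for any $(m_0/2)$-regular clique $K$ of size $m_0$ in $\Gamma_0$.
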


\begin{lemm}\label{c2l3}
If $\Gamma_0$ is strongly regular, then $\Gamma$ is not strongly regular. 
\end{lemm}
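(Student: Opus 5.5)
The plan is to read off, from the group ring computation already carried out in the proof of Lemma~\ref{c2l2}, the number of common neighbours of $e$ with two different kinds of non-neighbours of $e$, and to show these two numbers differ. Since $\Gamma$ is a Cayley graph, the number of common neighbours of $e$ and a vertex $g\notin S\cup\{e\}$ equals the coefficient of $g$ in $\underline{S}^2$. So it suffices to find two non-identity elements outside $S$ whose coefficients in $\underline{S}^2$ are different. This rules out a constant $\mu$, so $\Gamma$ is not strongly regular.

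The first step is to make the term $\xi$ from Eq.~\eqref{eq2} explicit using the hypothesis. Since $\Gamma_0$ is strongly regular with some parameter $\mu_0$, we have $\underline{S_0}^2=k_0e+\lambda_0\underline{S_0}+\mu_0(\underline{G_0}-\underline{S_0}-e)$. Hence $\xi=\mu_0\,\underline{G_0^\#\setminus S_0}$.

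The second step considers two test elements, namely $a\in A_0^\#$ and $ac$ with $a\in A_0^\#$. These exist because $A_0$ is nontrivial. Both are non-identity. Neither lies in $S$: they are not in $S_1=(A\setminus A_0)T$ because their $A$-component lies in $A_0$. They are not in $S_0C$ because $S_0\cap A_0=\varnothing$. They are not equal to $c$ because $a\neq e$.

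The third step computes their coefficients in $\underline{S}^2=\underline{S_1}^2+\underline{S_2}^2+2\underline{S_1}\cdot\underline{S_2}$ from Eqs.~\eqref{eq1}--\eqref{eq3}.
\begin{itemize}
\item The cross term \eqref{eq3} is supported on $(A\setminus A_0)H$, so it contributes $0$ to both elements.
\item In \eqref{eq1}, the term $v_0\underline{A_0}$ contributes $v_0$ at $a$ and $0$ at $ac$. The term $(v_0/2)(\underline{G_0}-\underline{A_0})\underline{C}$ is supported off $A_0C$, so it contributes $0$ to both.
\item In \eqref{eq2}, the only relevant summand is $2\xi\underline{C}$, since $a,ac\notin\{e,c\}\cup S_0C$. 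It contributes $2\mu_0$ at both $a$ and $ac$, because $a\in G_0^\#\setminus S_0$.
\end{itemize}
Thus the coefficients are $v_0+2\mu_0$ at $a$ and $2\mu_0$ at $ac$. These differ since $v_0>0$.

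The main point needing care is the bookkeeping of supports. One must check that $A_0C$ avoids $S$, and that $A_0^\#c$ avoids the supports of $v_0\underline{A_0}$, of $(\underline{G_0}-\underline{A_0})\underline{C}$, and of the cross term. All of this follows directly from $S_0\cap A_0=\varnothing$ and $C\cap G_0=\{e\}$.
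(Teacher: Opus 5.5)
Your proposal is correct and is essentially the paper's own proof. Like the paper, you specialise $\xi$ in Eq.~\eqref{eq2} to $\mu_0\,\underline{G_0^\#\setminus S_0}$ and observe that elements of $A_0^\#$ and of $A_0^\#c$, all outside $S\cup\{e\}$, enter $\underline{S}^2$ with the distinct coefficients $v_0+2\mu_0$ and $2\mu_0$. Your explicit support bookkeeping, which the paper leaves to the reader, checks out, and you rightly write $G_0^\#$ where the paper's proof has the typo $\underline{G}^\#$.
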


\begin{proof}
Let $\lambda_0$ and $\mu_0$ be the numbers of common neighbours of two abjacent and two distinct non-adjacent vertices of $\Gamma_0$, respectively. Then $\underline{S_0}^2=k_0e+\lambda_0\underline{S_0}+\mu_0(\underline{G}^\#-\underline{S_0})$ and hence $\xi$ in Eq.~\eqref{eq2} is equal to $\mu_0(\underline{G}^\#-\underline{S_0})$. Due to Eqs.~\eqref{eq0}-\eqref{eq3} and $S_0\cap A_0=\varnothing$, we conclude that $\underline{A_0}^\#$ and $\underline{A_0^\#c}$ enter $\underline{S}^2$ with distinct coefficients $v_0+2\mu_0$ and $2\mu_0$, respectively. Thus, $\Gamma$ is non-strongly regular.
\end{proof}

\begin{proof}[Proof of Theorem~\ref{main2}]
Let $n$ be a positive even integer, $A_0\cong H_0\cong C_2^n$ and hence $G_0\cong C_2^{2n}$, and $l=2^{n-1}+1$. It is well-known that there exist pairwise trivially intersecting and trivially intersecting with $A_0$ subgroups $A_1,\ldots,A_l$ of $G_0$ of order~$2^n$. Put 
$$S_0=\bigcup \limits_{i=1,\ldots,l} A_i^\#.$$
Then the graph $\Gamma_0=\cay(G_0,S_0)$ is strongly regular with parameters
$$(v_0,k_0,\lambda_0,\mu_0)=(2^{2n},(2^n-1)(2^{n-1}+1),2(2^{n-1}-1)(2^{n-2}+1),2^{n-1}(2^{n-1}+1))$$
(see, e.g.~\cite{BM2022}). Clearly, $S_0\cap A_0=\varnothing$. Due to the definition of $\Gamma_0$, each of the subgroups $A_1,\ldots,A_l$ is a $2^{n-1}$-regular clique of size~$2^n$ in $\Gamma_0$. So $\Gamma_0$ is a Neumaier graph. 

Let $A$ be an abelian group of order $2^{n+1}$ such that $A\geq A_0$ and $|A:A_0|=2$, $C\cong C_2$, $H=H_0\times C\cong C_2^{n+1}$, and $G=A\times H$. By Lemma~\ref{hadamar} (Lines~3 and~5 in Table~1), the group $H$ has an RSRDS~$T$ with parameters~$(2^n,2,2^n,2^{n-1})$ and forbidden subgroup $C$. Therefore one can construct the graph $\Gamma=\cay(G,S)$ with connection set $S=(A\setminus A_0)T\cup S_0C\cup\{c\}$. It is easy to verify that the parameters of $\Gamma_0$ satisfy Eq.~\eqref{criterion} and hence $\Gamma$ is a Neumaier graph with the required parameters by Corollary~\ref{c2cor1}. By Lemma~\ref{c2l3}, $\Gamma$ is a strictly Neumaier graph.
\end{proof}

\section{An algorithm for enumeration of Neumaier Cayley graphs with a spread given by the cosets of a subgroup}\label{s:enum}

In this section, we provide Algorithm~\ref{alg:one} for enumerating Neumaier Cayley graphs with a spread given by the cosets of a subgroup. The running time of this algorithm is exponential. A correctness of Algorithm~\ref{alg:one} follows from Proposition~\ref{correct}. Before we give a proof of this proposition, we prove an auxiliary lemma.

\begin{lemm}\label{lem:StabHg_i}
    Let $G$ be a finite group, $H\leq G$, and $G/H=\{Hg_1,\dots,Hg_n\}$, where $g_1=e$. Then 
    $
    \operatorname{Stab}_{\operatorname{Aut}G}(Hg_i) \leq \operatorname{Stab}_{\operatorname{Aut}G}(H)
    $
   for every $i \in \{2,\ldots,n\}$, where the stabilisers are setwise.
\end{lemm}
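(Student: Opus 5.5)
Let $\varphi\in\operatorname{Aut}G$ with $\varphi(Hg_i)=Hg_i$ for some $i\ge 2$; I will show $\varphi(H)=H$. The key identity is that a right coset determines its subgroup via quotients: for any $x\in G$,
$$\{ab^{-1}:~a,b\in Hx\}=\{h_1xx^{-1}h_2^{-1}:~h_1,h_2\in H\}=H.$$
So the map $X\mapsto XX^{-1}=\{ab^{-1}:~a,b\in X\}$ sends every right coset of $H$ to $H$ itself.

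Apply this with $X=Hg_i$. Since $\varphi$ is a homomorphism, $\varphi(ab^{-1})=\varphi(a)\varphi(b)^{-1}$, and therefore
$$\varphi(H)=\varphi\bigl((Hg_i)(Hg_i)^{-1}\bigr)=\varphi(Hg_i)\,\varphi(Hg_i)^{-1}=(Hg_i)(Hg_i)^{-1}=H.$$
Hence $\varphi\in\operatorname{Stab}_{\operatorname{Aut}G}(H)$, which gives the inclusion. The case $g_1=e$ is excluded in the statement only because there the inclusion is trivially an equality.

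An alternative route is to note that $\varphi(Hg_i)=\varphi(H)\varphi(g_i)$ is a right coset of the subgroup $\varphi(H)$. If it equals $Hg_i$, then the two subgroups $\varphi(H)$ and $H$ share a right coset, and right cosets determine their subgroups because $Hx=Kx$ implies $H=K$. I do not expect a real obstacle here. The only point needing care is that the stabiliser is setwise, so equality of sets, rather than pointwise fixing, is what has to be transported through $\varphi$, and that is exactly what the quotient-set argument handles.
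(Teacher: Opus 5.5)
Your proof is correct and is essentially the paper's argument: the paper shows element-wise that $\varphi(h)=\varphi(hg_i)\varphi(g_i)^{-1}$ is a quotient of two elements of $\varphi(Hg_i)=Hg_i$ and so lies in $H$, which is exactly the inclusion $\varphi(H)\subseteq (Hg_i)(Hg_i)^{-1}=H$. Your set-level identity gives the equality $\varphi(H)=H$ directly. The paper only derives $\varphi(H)\subseteq H$ and tacitly uses injectivity of $\varphi$ and finiteness of $H$ to conclude.
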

\begin{proof}
    Let $i \in \{2,\ldots,n\}$, $h\in H$, and $\varphi \in \operatorname{Stab}_{\operatorname{Aut}G}(Hg_i)$. The latter condition implies that there exists $h^\prime\in H$ such that $\varphi(hg_i)=h^\prime g_i$ and hence $\varphi(h)\varphi(g_i)=h^\prime g_i$. Therefore $\varphi(h)=h^\prime g_i\varphi(g_i)^{-1}$. Since $g_i$ and $\varphi(g_i)$ are representatives of the coset $Hg_i$, we have $g_i\varphi(g_i)^{-1} \in H$. Thus, $\varphi(h) \in H$, and, consequently, $\varphi \in \operatorname{Stab}_{\operatorname{Aut}G}(H)$.
\end{proof}


\begin{prop}\label{correct}
For any valid instance of input data, Algorithm \ref{alg:one} finishes in finite time and the following statements hold.
\begin{enumerate}
\tm{1} The set $\mathcal{N}$ consists of Neumaier Cayley graphs over $G$ with parameters $(v,k,\lambda,m,s)$ and a spread given by the $H$-cosets.

\tm{2} Every Neumaier Cayley graph over $G$ with parameters $(v,k,\lambda,m,s)$ and a spread given by the $H$-cosets is isomorphic to some graph from $\mathcal{N}$.
\end{enumerate}
\end{prop}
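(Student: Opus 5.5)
We have not yet seen Algorithm~\ref{alg:one}, so the plan rests on the structure its description suggests. The algorithm presumably fixes $G$ and $H$ and lists the cosets $Hg_1=H,Hg_2,\dots,Hg_n$. For each $i\geq 2$ it computes the possible sets $T\cap Hg_i$ of size $m$, probably up to the action of $\operatorname{Stab}_{\operatorname{Aut}G}(Hg_i)$, and respecting inverse-closedness, since $(Hg_i)^{-1}=g_i^{-1}H$ is a left coset. It then combines these pieces coset by coset, pruning with conditions (2) and (3) of Theorem~\ref{main1}. Finally it keeps those $S=H^\#\cup T$ passing all tests and removes isomorphic duplicates, giving $\mathcal N$.

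The proof splits into three parts.

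\textbf{Termination.} The algorithm only ranges over subsets of the finite set $G$ and the finite group $\operatorname{Aut}G$. It therefore performs finitely many iterations, each with finitely many checks, so it stops.

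\textbf{Statement (1).} Every graph placed in $\mathcal N$ is $\cay(G,S)$ with $e\notin S$, $S=S^{-1}$, $H^\#\subseteq S$ and $T=S\setminus H^\#$. By construction $T$ satisfies conditions (1)--(3) of Theorem~\ref{main1}, so that theorem shows $\cay(G,S)$ is Neumaier with regular clique $H$ and the prescribed parameters. The cosets $Hg$ are also regular cliques, because right multiplication $x\mapsto xg$ is an automorphism of $\cay(G,S)$ carrying $H$ onto $Hg$. Hence the $H$-cosets form a spread.

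\textbf{Statement (2).} This is the main obstacle. Let $\Gamma=\cay(G,S)$ be Neumaier with the given parameters and spread given by the $H$-cosets. We may assume $H$ is one of the cliques with $H^\#\subseteq S$: since $H$ is a clique containing $e$, every nonidentity element of $H$ is adjacent to $e$. By Theorem~\ref{main1}, $T=S\setminus H^\#$ satisfies (1)--(3). We must show some graph in $\mathcal N$ is isomorphic to $\Gamma$. The algorithm only tries representatives of orbits, so replace $S$ by $\varphi(S)$ for a suitable $\varphi\in\operatorname{Aut}G$; this gives an isomorphic Cayley graph. The difficulty is that $\varphi$ must also preserve $H$ and the earlier-fixed cosets, so that (1)--(3) and the spread still hold.

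Here Lemma~\ref{lem:StabHg_i} is the key tool. Any $\varphi$ stabilising some coset $Hg_i$ with $i\geq 2$ automatically stabilises $H$. Hence it maps $H^\#$ to itself, maps $T$ to $\varphi(T)$, and preserves the coset partition, since it maps $Hx$ to $\varphi(H)\varphi(x)=H\varphi(x)$. So we take $\varphi\in\operatorname{Stab}(Hg_i)$ carrying $T\cap Hg_i$ to the chosen orbit representative. Then $\varphi(S)=H^\#\cup\varphi(T)$ again satisfies the hypotheses of Theorem~\ref{main1}, and $\cay(G,\varphi(S))\cong\Gamma$.

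The remaining, more bookkeeping, step is to check that the search explores every completion compatible with the normalised choice. That is, once one coset intersection is fixed up to the stabiliser, the algorithm enumerates all remaining subsets satisfying the pruning conditions. I would argue this by induction over the order in which cosets are processed, using only that the pruning tests are necessary conditions implied by (1)--(3). The final isomorphism filtering removes only duplicates, so some member of $\mathcal N$ is isomorphic to $\cay(G,\varphi(S))\cong\Gamma$.
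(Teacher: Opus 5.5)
Your proposal is correct and follows essentially the same route as the paper. Normalise $T\cap Hg_2$ by some $\varphi\in\operatorname{Stab}_{\operatorname{Aut}G}(Hg_2)$, use Lemma~\ref{lem:StabHg_i} to get $\varphi(H^\#)=H^\#$, so that $\varphi(S)=H^\#\cup\varphi(T)$ has the required form with $\cay(G,\varphi(S))\cong\Gamma$, and then rely on the search listing every inverse-closed completion with $m$ elements in each coset; in the actual algorithm the pruning uses only inverse-closure and Condition~(1), with (2) and (3) tested at the end, which still fits your ``necessary conditions'' framework. You also spell out termination and why right translations make every $H$-coset a regular clique, so that the cosets form a spread, which the paper leaves implicit.
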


\begin{proof}
The set $T$ obtained before the last step of the algorithm satisfies Condition~(1) from Theorem~\ref{main1} by construction. So the graph added to the set $\mathcal{N}$ at the last step of the algorithm is a Neumaier graph with the required parameters by Theorem~\ref{main1}. 

Let us prove that every Neumaier graph $\cay(G,H^\#\cup T)$ with parameters $(v,k,\lambda,m,s)$ and a spread given by the $H$-cosets is isomorphic to some graph from $\mathcal{N}$. Let $T_2=T\cap Hg_2$. Theorem~\ref{main1}(1) yields that $|T_2|=m$. By the definition of the set $S_2$, there exist $T_2^\prime\in S_2$ and $\varphi\in \operatorname{Stab}_{\operatorname{Aut}G}(Hg_2)$ such that $\varphi(T_2)=T_2^\prime$. The algorithm constructs all Neumaier Cayley graphs $\cay(G,H^\#\cup T^\prime)$ with parameters $(v,k,\lambda,m,s)$ and a spread given by the $H$-cosets such that $T^\prime\cap Hg_2=T_2^\prime$. Together with Lemma~\ref{lem:StabHg_i} and condition $\varphi\in \operatorname{Stab}_{\operatorname{Aut}G}(Hg_2)$, this implies that 
$$\cay(G,\varphi(H^\#\cup T))=\cay(G,\varphi(H^\#)\cup \varphi(T_2)\cup \varphi(T\setminus T_2))=\cay(G,H^\#\cup T_2^\prime\cup \varphi(T\setminus T_2))\in \mathcal{N}$$ 
and we are done.
\end{proof}

\begin{algorithm}\label{Algorithm}
\caption{Enumeration of Neumaier Cayley graphs}\label{alg:one}
\KwData{~\\
a feasible tuple of parameters $(v,k,\lambda,m,s)$ of a Neumaier graph\; 
a finite group $G$, $|G| = v$\; 
a subgroup $H<G$, $|H| = s$, with cosets $Hg_1,\dots,Hg_n$, where $g_1=e$\; 
the automorphism group $\operatorname{Aut}G$\;
the stabiliser $\operatorname{Stab}_{\operatorname{Aut}G}(Hg_2)$ \Comment*[r]{See Lemma \ref{lem:StabHg_i}}}
\KwResult{~\\the set $\mathcal{N}$ of Neumaier Cayley graphs with parameters $(v,k,\lambda,m,s)$ over $G$ (each graph is given as $\operatorname{Cayley}(G,H^\# \cup T)$)
\Comment*[r]{See Theorem \ref{main1}}
}
~\\
$\mathcal{N}\gets \varnothing$\\
$S_2 \gets m\text{-subsets of } Hg_2$ that are non-equivalent under $\operatorname{Stab}_{\operatorname{Aut}G}(Hg_2)$\;

\For{$T_2 \in S_2$}{
    \If{$T_2^{-1} \cap (Hg_2 \setminus T_2) \ne \varnothing$}{
        take next $T_2$\;
     }
     $R_3 \gets Hg_3 \cap T_2^{-1}$ \Comment*[r]{The elements from $Hg_3$ given by $T_2$}
     $S_3 \gets (m-|R_3|)\text{-subsets of } Hg_3$\;
     \For{$\overline{T_3} \in S_3$}{
     $T_3 \gets \overline{T_3} \cup R_3$\;
    \If{$T_3^{-1} \cap ((Hg_2 \cup Hg_3) \setminus (T_2 \cup T_3)) \ne \varnothing$}{
        take next $\overline{T_3}$\;
        }
        $R_4 \gets Hg_4 \cap (T_2^{-1} \cup T_3^{-1})$ \Comment*[r]{The elements from $Hg_4$ given by $T_2 \cup T_3$}
     \If{$|R_4| > m$}{
        take next $\overline{T_3}$\;
     }
     $S_4 \gets (m-|R_4|)\text{-subsets of } Hg_4$\;
        $\ldots$\\
        \For{$\overline{T_n} \in S_n$}{
     $T_n \gets \overline{T_n} \cup R_n$\;
    \If{$T_n^{-1} \cap ((Hg_2 \cup \ldots \cup Hg_n) \setminus (T_2 \cup \ldots \cup T_n)) \ne \varnothing$}{
        take next $\overline{T_n}$\;
        }
        $T \gets T_2 \cup \ldots \cup T_n$\;
        \If{$T = T^{-1}$ and Conditions (2), (3) from Theorem \ref{main1} hold}{
            $\mathcal{N}\gets \mathcal{N}\cup \cay(G,H^\# \cup T)$\;
        }
        }
        $\ldots$\\
        
    }
}
\end{algorithm}

Algorithm~\ref{alg:one} was realized using GAP and MAGMA. Applying Algorithm~\ref{alg:one}, a standard procedure of isomorphism testing between two graphs implemented in MAGMA, and counting the number of common neighbours of two non-adjacent vertices in each graph, we enumerate all pairwise nonisomorphic strictly Neumaier Cayley graphs with a spread given by the cosets of a subgroup on at most 64 vertices. It turns out that most of the computed strictly Neumaier graphs have nexus~1 and there are only few strictly Neumaier graphs with nexus greater than~1. All of the latter graphs are collected in Table~\ref{table:1}. The smallest graph from Theorem~\ref{main2} with parameters $(64,35,18,4,8)$ was found by a computer calculation using Algorithm~\ref{alg:one}.

\begin{table}
\begin{center}
\begin{tabular}{|c|c|}
  \hline
  Tuple & \#SNGs\\
	\hline
	(16,9,4,2,4)  & 1\\
  \hline
  (64,28,12,3,8)  & 18 \\
  \hline
  (64,35,18,4,8)  & 1380 \\
  \hline
  (64,42,26,5,8)  & 1 \\
  \hline
\end{tabular}~~~~~~~~
\end{center}
\caption{Strictly Neumaier Cayley graphs with a spread given by the cosets of a subgroup and nexus $>1$ on at most 64 vertices}\label{table:1}
\end{table}

 \newpage

\section*{Acknowledgements}
All authors are indexed alphabetically according to the convention of the mathematics society. 
All authors are co-first authors.
Rhys J. Evans acknowledges the support of the Slovenian Research and Innovation Agency (ARIS), project numbers P1-0294 and J1-4351. S. Goryainov was supported by the grant of The Natural Science Foundation of Hebei Province (project No.~A2023205045).
Da Zhao acknowledges the support by the National Natural Science Foundation of China (No. 12471324, No. 12501459, No. 12571353), and the Natural Science Foundation of Shanghai, Shanghai Sailing Program (No. 24YF2709000).

\end{document}